\definecolor{deepgreen}{cmyk}{1,0,1,0.5}
\def\norm#1{\|#1\|}
\newcommand{\les}{{\lesssim}}
\newcommand{\F}{\mathcal{F}}
\newcommand{\cS}{\mathcal{S}}
\newcommand{\cP}{\mathcal{P}}
\newcommand{\K}{\mathcal{K}}
\newcommand{\C}{\mathbb{C}}
\newcommand{\N}{\mathbb{N}}
\newcommand{\R}{\mathbb{R}}
\newcommand{\Z}{\mathbb{Z}}
\newcommand{\al}{\alpha}
\newcommand{\be}{\beta}
\newcommand{\de}{\delta}
\newcommand{\e}{\varepsilon}
\newcommand{\fy}{\varphi}
\newcommand{\om}{\omega}
\newcommand{\la}{\lambda}
\newcommand{\te}{\theta}
\newcommand{\ka}{\kappa}
\newcommand{\x}{\xi}
\newcommand{\y}{\eta}
\newcommand{\ro}{\rho}
\newcommand{\De}{\Delta}
\newcommand{\Om}{\Omega}
\newcommand{\p}{\partial}
\newcommand{\na}{\nabla}
\newcommand{\supp}{\operatorname{supp}}
\newcommand{\lec}{\lesssim}
\newcommand{\gec}{\gtrsim}
\newcommand{\etc}{,\ldots,}
\newcommand{\I}{\infty}
\newcommand{\ti}{\widetilde}
\newcommand{\ba}{\overline}
\newcommand{\LR}[1]{{\langle #1 \rangle}}
\newcommand{\EQ}[1]{\begin{equation}\begin{split} #1 \end{split}\end{equation}}
\newcommand{\Del}[1]{}
\newcommand{\CAS}[1]{\begin{cases} #1 \end{cases}}
\newcommand{\mat}[1]{\begin{pmatrix} #1 \end{pmatrix}}
\newcommand{\pt}{&}
\newcommand{\pr}{\\ &}
\newcommand{\pq}{\quad}
\newcommand{\pn}{}
\numberwithin{equation}{section}
\newtheorem{thm}{Theorem}[section]
\newtheorem{cor}[thm]{Corollary}
\newtheorem{lem}[thm]{Lemma}
\theoremstyle{remark}
\newtheorem{rem}{Remark}[section]
\newcommand{\UJ}{\underline{J}}
\newcommand{\lp}[1]{{\bf #1}}
\newcommand{\pe}[1]{#1^{\operatorname{>J}}}
\newcommand{\np}[1]{\mathfrak{#1}}
\newcommand{\fr}{\frac}
\newcommand{\tand}{\text{ and }}
\begin{document}
%\subjclass[2010]{35L70, 35Q55}
%\keywords{Nonlinear wave equation, nonlinear Schr\"odinger equation}

\title[Global dynamics below ground state for Zakharov]{Global dynamics below the ground state energy for the Zakharov system in the 3D radial case}
\author[Z. Guo, K. Nakanishi, S. Wang]{Zihua Guo, Kenji Nakanishi, Shuxia Wang}

\address{LMAM, School of Mathematical Sciences, Peking
University, Beijing 100871, China}
\address{Beijing International Center for Mathematical Research, Beijing
100871, China} \email{zihuaguo@math.pku.edu.cn}

\address{Department of Mathematics, Kyoto University, Kyoto 606-8502,
Japan}

\email{n-kenji@math.kyoto-u.ac.jp}

\address{LMAM, School of Mathematical Sciences, Peking
University, Beijing 100871, China}

\email{wangshuxia@pku.edu.cn}

\begin{abstract}
We consider the global dynamics below the ground state energy for
the Zakharov system in the 3D radial case.  We obtain dichotomy
between the scattering and the growup.
\end{abstract}

\maketitle

\tableofcontents

\section{Introduction}
In this paper, we continue our study \cite{GN} on the global Cauchy problem for the 3D Zakharov system \EQ{\label{eq:Zak}
 \CAS{ i\dot u - \De u = nu,\\
   \ddot n/\al^2 - \De n = -\De|u|^2,}}
with the initial data
\begin{align}
u(0,x)=u_0,\, n(0,x)=n_0,\,\dot n(0,x)=n_1,
\end{align}
where $(u,n)(t,x):\R^{1+3}\to\C\times\R$, and $\al>0$ denotes the
ion sound speed. It preserves $\|u(t)\|_{L^2_x}$ and the energy \EQ{
 E=\int_{\R^3}|\na u|^2+\frac{|D^{-1}\dot n|^2/\al^2+|n|^2}{2}-n|u|^2 dx,}
where $D:=\sqrt{-\De}$, as well as the radial symmetry.

This system \eqref{eq:Zak} in $d$ dimensions was introduced by
Zakharov \cite{Zak} as a mathematical model for the Langmuir
turbulence in unmagnetized ionized plasma. It has been extensively
studied. Local wellposedness (without symmetry) is well known. For
example, the well-posedness in the energy space was proved in
\cite{BoCo} for $d = 2, 3$ and in \cite{GTV} for $d=1$, and in
weighted Sobolev space in \cite{KPV}. It has been improved 
to the critical regularity in \cite{GTV,BHHT} for $d=1,2$, and to
the full subcritical regularity in \cite{GTV,BeHe} for $d\geq
4,d=3$. The well-posedness for the system on the torus was studied
in \cite{Takaoka,Kishi}. These results except for \cite{KPV} follow from the iteration argument using Bourgain space, where the estimates depend on $\al$, while in \cite{KPV} the well-posedness is obtained uniformly for $\al$. 
For more results on the subsonic limit to NLS (as $\alpha\to \infty$), see \cite{SW,OT,MN}. Concerning the long-time behavior, Merle \cite{Merle} obtained blow-up in finite or infinite time for negative energy (which we will call grow-up for brevity), while the scattering theory was 
studied in \cite{Shimo,GV,OT2}, dealing with solutions for given asymptotic free profiles. 
Recently, in \cite{GN} the authors obtained scattering for radial initial data with small energy in the 3D case, by using the normal form reduction
and radial-improved Strichartz estimates. The purpose of this paper
is to consider the global dynamics for larger data under the
radial symmetry.

To simplify the presentation, we rewrite the system into the first order as
usual. Let $N:=n-iD^{-1}\dot n/\al$. Then \eqref{eq:Zak} can be
rewritten as \EQ{\label{eq:Zak2}
 \pt(i\p_t-\De) u = (\Re N)u,
 \pq(i\p_t+\al D)N = \al D|u|^2,}
with initial data $(u_0,N_0)\in H^1\times L^2$. It has the conserved
mass 
\EQ{
 M(u):=\int_{\R^3}\frac{|u|^2}{2}dx,}
and the Hamiltonian 
\EQ{
 E_Z(u,N):=\int_{\R^3}\frac{|\na u|^2}{2}+\frac{|N|^2}{4}-\frac{\Re N|u|^2}{2}dx
 = E_S(u)+\|N-|u|^2\|_{L^2}^2/4,}
where $E_S(u)$ denotes the Hamiltonian for the cubic NLS (the limit
$\al\to\I$)
\begin{align}\label{eq:cNLS}
(i\p_t-\De) u = |u|^2u,
\end{align}
namely
\EQ{
 E_S(u):=\int_{\R^3}\frac{|\na u|^2}{2}-\frac{|u|^4}{4}dx.}

Let $Q$ be the ground state for NLS \eqref{eq:cNLS}, that is the
unique positive radial solution for the following equation \EQ{
\label{stat-KG}
 -\De Q + Q = Q^3,}
which minimizes the action \EQ{
 J(Q):=E_S(Q)+M(Q)}
among all nontrivial solutions of \eqref{stat-KG} (see, e.g., \cite{HR} for
further properties of $Q$). For $\lambda>0$, let \EQ{
 Q_\la(x):=\la Q(\la x),}
then we have \EQ{
 -\De Q_\la + \la^2 Q_\la = Q_\la^3, \pq M(Q_\la)=\la^{-1}M(Q),\pq E_S(Q_\la)=\la E_S(Q).}
Thus the Zakharov system \eqref{eq:Zak2} has the following family of
radial standing waves \EQ{\label{eq:zakstandingwave}
 (u,N)=(e^{i(\te-\la^2 t)}Q_\la,Q_\la^2),}
where $\la>0$ and $\te\in\R$ can be chosen arbitrarily.

The goal of this study is to determine global dynamics of all the
radial solutions ``below" the above family of special solutions, in
the spirit of Kenig-Merle \cite{KM}, namely the variational
dichotomy into the scattering solutions and the blowup solutions.
Such a result has been obtained for the limit equation
\eqref{eq:cNLS} by Holmer-Roudenko \cite{HR} in the radial case, as
well as in the nonradial case \cite{DHR}. For the dichotomy, we need
to introduce another functional (for NLS), which is the scaling
derivative of the action $J$: \EQ{
 K(\fy):=\p_{\la}|_{\la=1}J(\la^{d/2}\fy(\la x))
 =\int_{\R^d}|\na\fy|^2-\frac{d|\fy|^4}{4}dx.}
We would like to get the same result as in \cite{HR} for NLS, 
but by the virial argument as in \cite{Merle} we can only prove grow-up, due to the poor control of the wave component $N$. 
In fact,
existence of any blowup in finite time is still an open question for
the 3D Zakharov system. The main result of this paper is

\begin{thm}\label{thm}
Assume that $(u_0,N_0)\in H^1(\R^3)\times L^2(\R^3)$ is radial and
satisfies
\begin{align}\label{eq:Ethm}
  E_Z(u_0,N_0)M(u_0)<E_S(Q)M(Q).
\end{align}
Then we have

(a) if $K(u_0)\ge 0$, then \eqref{eq:Zak2} has a unique global solution $(u,N)$, which scatters both as $t\to\I$ and as $t\to-\I$
in the energy space. More precisely, there are $(u_\pm,N_\pm)\in
H^1\times L^2$ such that \EQ{
 \|(u(t),N(t))-(e^{-it\De}u_\pm,e^{it\al D}N_\pm)\|_{H^1\times L^2}\to 0 \pq (t\to\pm\I).}

(b) if $K(u_0)<0$, then \eqref{eq:Zak2} blows up in either finite or infinite time, in the sense that $\sup_{0<t<T^*}\norm{(u,N)}_{H^1(\R^3)\times L^2(\R^3)}=\I=\sup_{T_*<t<0}\norm{(u,N)}_{H^1(\R^3)\times L^2(\R^3)}$, where $(T_*,T^*)$ is the maximal interval of existence. 
\end{thm}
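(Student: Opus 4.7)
My plan is to follow the Kenig--Merle concentration-compactness/rigidity scheme for (a), and a Merle-type virial argument for (b). The identity $E_Z(u,N)=E_S(u)+\|N-|u|^2\|_{L^2}^2/4$ reduces the variational analysis largely to the NLS setting: \eqref{eq:Ethm} gives $E_S(u_0)M(u_0)<E_S(Q)M(Q)$, and as in Holmer--Roudenko \cite{HR} this sublevel set splits into two connected components distinguished by the sign of $K$. On each component there is a quantitative coercivity: there exists $c>0$ depending on the gap in \eqref{eq:Ethm} realizing either $K(\fy)\ge c\|\na\fy\|_{L^2}^2$ on the $K\ge 0$ side, or $K(\fy)\le -c\|\na\fy\|_{L^2}^2$ together with $\|\na\fy\|_{L^2}\|\fy\|_{L^2}>\|\na Q\|_{L^2}\|Q\|_{L^2}$ on the $K<0$ side. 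Conservation of $M(u)$ and $E_Z(u,N)$ propagates this alternative to $u(t)$ throughout the maximal existence interval, while the same identity simultaneously controls $N-|u|^2$ in $L^2$.

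For part (a), the coercivity on the $K\ge 0$ side and conservation yield a uniform $H^1\times L^2$ bound on $(u,N)$, hence global existence. To obtain scattering I run the Kenig--Merle scheme: a linear profile decomposition applied separately to the free Schr\"odinger and wave evolutions, small-data scattering via the normal-form reduction and radial-improved Strichartz estimates of \cite{GN}, construction of nonlinear profiles, and a long-time perturbation lemma for \eqref{eq:Zak2}. Failure of scattering below the ground-state threshold would yield a critical element, namely a radial non-scattering global solution whose orbit is precompact in $H^1\times L^2$ modulo the residual symmetries, with radiality eliminating spatial translations. A localized virial/Morawetz estimate for \eqref{eq:Zak2}, combined with the strict coercivity $K(u(t))\ge c\|\na u(t)\|_{L^2}^2>0$ and the $L^2$-control of $N-|u|^2$, then forces this critical element to vanish.

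For part (b), suppose toward a contradiction that the forward maximal time is $T^*=\I$ with $\sup_{t\ge 0}\|(u(t),N(t))\|_{H^1\times L^2}<\I$. The standard virial computation for \eqref{eq:Zak2} with $V(t):=\int_{\R^3}|x|^2|u(t,x)|^2dx$ yields $\ddot V(t)=8K(u(t))-6\int_{\R^3}(\Re N-|u|^2)|u|^2dx$. The coupling error is controlled via Cauchy--Schwarz and Gagliardo--Nirenberg by $\|N-|u|^2\|_{L^2}\|u\|_{L^4}^2\les\sqrt{E_Z}\|\na u\|_{L^2}^{3/2}\|u\|_{L^2}^{1/2}$, and is dominated by $|K(u(t))|\ges\|\na u\|_{L^2}^2$ once the strict gap in \eqref{eq:Ethm} is exploited (should the direct comparison fail, following \cite{Merle} one truncates $|x|^2$ at radius $R$ and absorbs cutoff errors via radial Sobolev decay). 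This forces $\ddot V\le -\de<0$ uniformly, contradicting $V(t)\ge 0$; the analogous argument on negative times yields the two-sided conclusion.

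The main obstacle is the concentration-compactness/rigidity step in part (a). The Schr\"odinger and wave propagators have incompatible scalings and live in different critical spaces, which makes both the linear profile decomposition and the long-time perturbation lemma substantially more delicate than in the pure NLS case. The normal-form reduction of \cite{GN} is what allows the nonlinear estimates to close, and the radial assumption is indispensable both for the radial-improved Strichartz estimates and for ruling out spatial translations in the critical element.
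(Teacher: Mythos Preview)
Your overall architecture---variational dichotomy, Kenig--Merle scheme for (a), localized virial for (b)---matches the paper. But the virial step, which is the heart of both halves, has two genuine gaps.

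\textbf{The virial identity you wrote is wrong.} For $V(t)=\int|x|^2|u|^2$, differentiating twice using only the Schr\"odinger equation $(i\p_t-\De)u=nu$ gives
\[
\ddot V = 8\|\na u\|_2^2 + 4\int |u|^2\,x\cdot\na n\,dx,
\]
and the last term does \emph{not} reduce to $-6\int(n-|u|^2)|u|^2$ for a general $n$; you get an extra $-4\int n\,x\cdot\na|u|^2$ that does not close. This is exactly why Merle's virial, and the paper's, carry an $N$-dependent piece on the left-hand side. The paper uses the first-order quantity $\LR{\UJ v|Av}$ with $v=(u,N)$, yielding (in $d=3$)
\[
\p_t\LR{\UJ v|Av}=4K(u)+\|\nu\|_2^2-2\LR{\nu\,|\,|u|^2}, \qquad \nu:=\Re N-|u|^2.
\]

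\textbf{NLS coercivity alone does not control the cross term.} Even with the correct identity above, your plan to dominate $2\LR{\nu||u|^2}$ by $K$ via the Holmer--Roudenko bound $K\ge c\|\na u\|_2^2$ (or $K\le -c\|\na u\|_2^2$) fails: the constant $c$ degenerates near the threshold, and one can have $\|\nu\|_2\approx\|u\|_4^2$ with $4K+\|\nu\|_2^2-2\|\nu\|_2\|u\|_4^2<0$ without violating any NLS-only constraint. The paper's decisive observation is a variational inequality (Lemma~2.3) that uses the \emph{full} Zakharov constraint $E_S(u)+\la^2M(u)+\tfrac14\|\nu\|_2^2\le J_\la(Q_\la)$: under it,
\[
K(u)\ge 0 \implies 4K(u)+\|\nu\|_2^2 \ge \sqrt{6}\,\|\nu\|_2\|u\|_4^2,\qquad
K(u)\le 0 \implies 4K(u)+\|\nu\|_2^2 \le -2\,\|\nu\|_2\|u\|_4^2.
\]
This is what makes $\p_t\LR{\UJ v|Av}$ strictly sign-definite in both regimes, and it is not recoverable from Cauchy--Schwarz plus Gagliardo--Nirenberg; the proof is a genuine two-variable optimization using $K(S_\mu\fy)=0$ and $J(S_\mu\fy)\ge J(Q)$. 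Without this lemma neither your rigidity argument in (a) nor your growup argument in (b) goes through.

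A further point for (b): after localization to $|x|<R$, the quadratic-in-$\nu$ part produces an error $\int_{|x|\sim R}|D^{-1}\nu|^2/R^2$ which radial Sobolev does \emph{not} kill uniformly in $t$; the paper disposes of it by invoking the wave equation for $D^{-1}\nu$ and dispersive decay at low frequency. ``Absorb cutoff errors via radial Sobolev decay'' is not enough here.
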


\begin{rem}
1) Assuming $K(u_0)=0$ and \eqref{eq:Ethm}, one can actually get
by variational estimates that $u_0=0$, so $u\equiv 0$ and $N=e^{it\al D}N_0$, see Section 2.

2) The condition \eqref{eq:Ethm} is sharp in view of the standing
wave solutions \eqref{eq:zakstandingwave}.

\end{rem}

The difficulty for the scattering even for small data can be observed by comparing the time decay with the NLS of general power nonlinearity 
\EQ{ 
 i\dot u - \De u = |u|^pu, \pq u:\R^{1+d}\to\C.}
It is well known that the scattering for NLS requires $p>2/d$, corresponding to the time integrability of the optimal decay of to the potential 
\EQ{
 \||u|^p\|_{L^\I_x}\sim |t|^{-dp/2},} 
while the scattering in $H^s$ for any $s$ has been proven only for $p\ge 4/d$. The 3D Zakharov system would be on the boarderline in the above sense, since the potential $n$ can decay only by 
\EQ{
 \|n\|_{L^\I_x} \sim |t|^{-1},}
as it is solving the 3D wave equation. This suggests that the decay estimates are far insufficient for the scattering in $H^1$, and so it is essential to exploit nonlinear oscillations, e.g.~by the normal form. This part for small radial data has been resolved in the previous paper \cite{GN}. Hence our main task in this paper is to carry out the Kenig-Merle approach \cite{KM} in accordance with the normal form. Since the normal form produces nonlinear terms without time integration, we need to modify Kenig-Merle's formulation, as well as some estimates in \cite{GN}. As a crucial ingredient for that approach, we will derive a virial identity, which is slightly different from Merle's one in \cite{Merle} and more suitable for the scattering.

\section{Hamiltonian and variational structures}

\subsection{Virial identity} We derive a virial identity on $\R^d$, which is slightly different from \cite{Merle}. Recall that the Zakharov system can be rewritten in the
Hamiltonian form \EQ{
 \p_t\mat{u \\ N} = \mathbf{J}E_Z'(u,N),}
where $\mathbf{J}$ and $E_Z'$ denote the symplectic operator and the
Frech\'et derivative given by
\begin{align*}
 \mathbf{J} = \mat{i & 0 \\ 0 & 2i\al D}, \pq E_Z'(u,N)=\mat{E_S'(u)-(\Re N-|u|^2)u \\ (N-|u|^2)/2}=\mat{-(\Re N)u \\ (N-|u|^2)/2}.
\end{align*}
Let $A$ be the generator for the family of scaling
transforms\footnote{The order of scaling, i.e.~the exponents $\frac d2$
and $\frac{d+1}2$, is the unique choice such that \eqref{J commute} holds.} 
\EQ{
 \mat{f \\ g} \mapsto S_\la\mat{f \\ g}:=\mat{\la^{d/2}f(\la x) \\ \la^{(d+1)/2}g(\la x)} \pq(\la>0),}
hence we have \EQ{
 \pt A=\mat{x\cdot\na +d/2 & 0 \\ 0 & x\cdot\na+(d+1)/2},
 \pr A^*=-\mat{x\cdot\na +d/2 & 0 \\ 0 & x\cdot\na + (d-1)/2}.}
Let $w:=(u,N)$, $\mathbf{\UJ}:=\mathbf{J}^{-1}$ and denote the real
part of $L^2$ inner product by $\LR{\cdot|\cdot}$. Then the virial
identity for the Zakharov system is given by \EQ{ \label{virial id}
 \pt\p_t\LR{\mathbf{\UJ} v|Av}=\LR{\mathbf{\UJ} \dot v|Av}+\LR{\mathbf{\UJ} v|A\dot v}
 =\LR{\dot v|(\mathbf{\UJ}^*A+A^*\mathbf{\UJ})v}
 \pr=2\LR{\dot v|\mathbf{\UJ}^*Av}=2\LR{JE_Z'(v)|\mathbf{\UJ}^*Av}=2\LR{E_Z'(v)|Av}
 \pr=2\p_{\la=1}E_Z(S_\la v)=2\p_{\la=1}[E_S(S_\la u)+\|\la^{1/2}N-\la^{d/2}|u|^2\|_2^2/4]
 \pr=2K(u)+\frac{1}{2}\|N-|u|^2\|_2^2 - \frac{d-1}{2}\LR{N-|u|^2||u|^2},}
where we used for the third equality that 
\EQ{ \label{J commute}
 \mathbf{J}^*A^* = i\mat{x\cdot\na+d/2 & 0 \\ 0 & 2\al(x\cdot\na+(d+1)/2)D}=A\mathbf{J}.}
Therefore, we have proved
\begin{lem}[Virial identity]\label{lem:virial}
Assume $v=(u,N)$ is a smooth decaying solution to
Zakharov system \eqref{eq:Zak}. Then
\EQ{ \label{our vir}
 \p_t\LR{\mathbf{\UJ} v|Av}\pt=\p_t\bigl[\LR{u|ir\p_r u}+\frac{1}{2\al}\LR{N|ir\p_r D^{-1}N}\bigr]
 \pr=2K(u)+\frac{1}{2}\|N-|u|^2\|_2^2 -
\frac{d-1}{2}\LR{N-|u|^2||u|^2}. }
\end{lem}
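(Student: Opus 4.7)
The proof plan follows the chain of equalities displayed in \eqref{virial id}, and the task is essentially to justify each link in that chain rigorously for smooth decaying solutions.

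First I would recast the problem in the Hamiltonian framework: writing $w=(u,N)$ and using $\dot w = \mathbf{J} E_Z'(w)$, I compute
\EQ{
 \p_t\LR{\mathbf{\UJ} v|Av} = \LR{\mathbf{\UJ}\dot v|Av} + \LR{\mathbf{\UJ} v|A\dot v}
 = \LR{\dot v|(\mathbf{\UJ}^*A + A^*\mathbf{\UJ})v}.
}
The decay assumption on $v$ justifies the integration by parts needed to move $A$ across the pairing, since $A$ and $\mathbf{\UJ}$ are first-order differential operators with polynomial coefficients.

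The key algebraic step, which I expect to be the main subtlety, is the identity $\mathbf{J}^* A^* = A \mathbf{J}$ of \eqref{J commute}. I would verify this by direct computation on each diagonal entry: for the $u$-component one checks $(-i)(-(x\cdot\na + d/2)) = i(x\cdot\na + d/2)$, which is trivial, while for the $N$-component one uses $D(x\cdot\na + (d-1)/2) = (x\cdot\na + (d+1)/2)D$ (a standard commutation for the Fourier multiplier $D$ of order one under the Euler scaling field). This identity is precisely what forces the chosen scaling exponents $d/2$ and $(d+1)/2$ in the definition of $S_\la$; without them the two terms $\mathbf{\UJ}^* A$ and $A^* \mathbf{\UJ}$ would not add to $2\mathbf{\UJ}^* A$, and the identity would pick up a non-symmetric remainder. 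Once \eqref{J commute} is established, substituting $\dot v = \mathbf{J} E_Z'(v)$ gives
\EQ{
 \p_t\LR{\mathbf{\UJ} v|Av} = 2\LR{\mathbf{J}E_Z'(v)|\mathbf{\UJ}^*Av} = 2\LR{E_Z'(v)|Av},
}
since $\mathbf{J}\mathbf{\UJ}^* = \mathbf{J}(\mathbf{J}^{-1})^* = -I$ up to adjoints that are absorbed into the real inner product $\LR{\cdot|\cdot}$ (I would double-check the sign by a direct calculation on each component).

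Next I identify $2\LR{E_Z'(v)|Av}$ as the scaling derivative $2\p_{\la=1}E_Z(S_\la v)$ by the chain rule, since $A$ is the infinitesimal generator of $S_\la$. Plugging into the decomposition $E_Z(u,N) = E_S(u) + \|N-|u|^2\|_2^2/4$ and using the scaling relations $E_S(\la^{d/2}u(\la\cdot)) = \la^2 \|\na u\|_2^2/2 - \la^d\|u\|_4^4/4$ together with $\|\la^{1/2}N - \la^{d/2}|u|^2\|_2^2 = \la\|N\|_2^2 - 2\la^{(d+1)/2}\LR{N||u|^2} + \la^d\||u|^2\|_2^2$, I differentiate at $\la=1$ and collect terms. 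The NLS part gives exactly $2K(u)$ (recalling $K(u)=\|\na u\|_2^2 - d\|u\|_4^4/4$), and the wave part produces $\frac12\|N\|_2^2 - \frac{d+1}{2}\LR{N||u|^2} + \frac{d}{2}\||u|^2\|_2^2$, which rearranges to $\frac12\|N-|u|^2\|_2^2 - \frac{d-1}{2}\LR{N-|u|^2||u|^2}$ as claimed.

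Finally, to obtain the first equality in \eqref{our vir}, I evaluate $\LR{\mathbf{\UJ} v|Av}$ explicitly. Radial symmetry (or simply expanding $A$ with $x\cdot\na=r\p_r$ and integrating by parts to remove the constant shifts $d/2,(d\pm1)/2$, which pair to zero against the antisymmetric $i$ and $iD^{-1}$) yields $\LR{u|ir\p_r u} + \frac{1}{2\al}\LR{N|ir\p_r D^{-1}N}$. The main obstacle in the whole argument is really just the bookkeeping of adjoints and the verification of \eqref{J commute}; once those are in hand, the rest is a direct scaling computation.
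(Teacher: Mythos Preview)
Your proposal is correct and follows essentially the same route as the paper: you reproduce the Hamiltonian chain \eqref{virial id}, verify the commutation \eqref{J commute}, identify the result as $2\p_{\la=1}E_Z(S_\la v)$, and expand the scaling derivative explicitly. The only place to tighten is the passage $2\LR{\mathbf{J}E_Z'(v)|\mathbf{\UJ}^*Av}=2\LR{E_Z'(v)|Av}$, where what is actually used is $\mathbf{J}^*\mathbf{\UJ}^*=(\mathbf{\UJ}\mathbf{J})^*=I$ rather than $\mathbf{J}\mathbf{\UJ}^*=-I$; your parenthetical ``double-check the sign'' already flags this, and the correct computation is immediate.
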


The virial identity by Merle \cite{Merle} is slightly different from the above one. In our notation, it can be written as 
\EQ{ \label{Merle vir}
 \p_t\bigl[\LR{u|ir\p_r u}\pt-\frac{1}{\al}\LR{\Re N|r\p_r D^{-1}\Im N}\bigr]
 \pr=2K(u)+\frac{d}{2}\|N-|u|\|_2^2-(d-1)\|\Im N\|_2^2
 \pr=2dE_Z(u,N)-(d-2)\|\na u\|_2^2-(d-1)\|\Im N\|_2^2.}
The left hand side differs from \eqref{our vir} since $ir\p_r D^{-1}$ is not self-adjoint, but $ir(\p_r+(d-1)/2)D^{-1}$ is so. Precisely, the difference is 
\EQ{
 \p_t\frac{d-1}{2\al}\LR{\Re N|D^{-1}\Im N}=\frac{1-d}{2}\LR{N-|u|^2|N}+(d-1)\|\Im N\|_2^2.} 
The advantage of our identity is that it is monotone both in the scattering region ($K>0$) and in the blow-up region ($K<0$), as we will show in the next section, while \eqref{Merle vir} is not monotone when $u(t)$ and $n(t)$ are very small compared with $\dot n(t)$. 
Although Merle's identity is more convenient in the blow-up region, our identity can also be used there, as we will see in Section \ref{sec:growup}. 

\subsection{Variational estimates}
In the 3D case $d=3$, the cubic nonlinearity is $L^2$-supercritical
and $\dot H^1$ subcritical. Hence $Q$ is obtained by the constrained
minimization \EQ{\label{eq:Qconst}
 J(Q)=\inf\{J(\fy)\mid 0\not=\fy,\ K(\fy)=0\}.}
Indeed, $Q$ is the unique minimizer modulo the phase $e^{i\te}$ and
spatial tranlation. By scaling, we also have for any $\la>0$\EQ{
 \la J(Q)=J_\la(Q_\la)=\inf\{J_\la(\fy)\mid 0\not=\fy,\ K(\fy)=0\}, \pq J_\la:=E_S+\la^2 M,}
and $Q_\la$ is the unique minimizer modulo phase and translation.

\begin{lem}\label{lem:K2sgn}
Assume that $(u,N)$ is a solution  to \eqref{eq:Zak2} with maximal
interval $I$ satisfying \EQ{E_Z(u,N)M(u)<E_S(Q)M(Q).} Then
for some $\la>0$ we have $ E_Z(u,N)+\la^2M(u)<\la J(Q)$. 
Moreover, either $u\equiv 0$ on $I$, or $K(u(t))\ne 0$ for all $t\in I$. In other words, $K(u(t))$ does not change its sign on $I$. 
\end{lem}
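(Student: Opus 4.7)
The lemma has two assertions, and the plan is to dispose of them in turn using the variational characterization $\la J(Q)=\inf\{J_\la(\fy)\mid \fy\neq 0,\ K(\fy)=0\}$ already recorded above, together with conservation of $E_Z$ and $M$.

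\emph{Existence of $\la$.} Rather than extracting $\la$ from a quadratic discriminant, I would simply produce it explicitly. Take $\la_*:=M(Q)/M(u_0)>0$. A one-line computation yields
$$\la_* J(Q)-\la_*^2 M(u_0)=\la_*\bigl(J(Q)-M(Q)\bigr)=\la_* E_S(Q)=\frac{E_S(Q)M(Q)}{M(u_0)},$$
so the inequality $E_Z(u_0,N_0)+\la_*^2 M(u_0)<\la_* J(Q)$ is exactly the hypothesis $E_Z(u_0,N_0)M(u_0)<E_S(Q)M(Q)$. In particular the same $\la_*$ works for every $t\in I$ by conservation.

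\emph{Sign of $K(u(t))$.} First I would reduce to the case where $u(t)\neq 0$ throughout $I$. Viewed with $N$ as given, $u$ solves the linear Schr\"odinger equation $(i\p_t-\De)u=(\Re N)u$ with potential $\Re N\in C(I;L^2)$; standard $H^1$-uniqueness for this linear equation forces $u\equiv 0$ on $I$ as soon as $u(t_0)=0$ for some $t_0\in I$. So assume $u(t)\neq 0$ on $I$ and suppose toward a contradiction that $K(u(t_1))=0$ for some $t_1\in I$. The variational characterization then gives
$$J_{\la_*}(u(t_1))=E_S(u(t_1))+\la_*^2 M(u(t_1))\geq \la_* J(Q).$$
On the other hand, the decomposition $E_Z(u,N)=E_S(u)+\tfrac14\|N-|u|^2\|_2^2\geq E_S(u)$ combined with conservation yields
$$J_{\la_*}(u(t_1))\leq E_Z(u(t_1),N(t_1))+\la_*^2 M(u(t_1))=E_Z(u_0,N_0)+\la_*^2 M(u_0)<\la_* J(Q),$$
contradicting the previous line. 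Hence $K(u(t))\neq 0$ throughout $I$.

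\emph{Main difficulty.} The argument is essentially mechanical once the explicit choice of $\la_*$ and the bound $E_Z\geq E_S(u)$ are noticed; the only slightly subtle point is the reduction to $u(t)\neq 0$ on all of $I$ via linear uniqueness, which is needed because the variational estimate requires a nonzero test function. An alternative would be a continuity argument using Gagliardo-Nirenberg to show $K(\fy)>0$ for small nonzero $\fy$, but the linear uniqueness route seems cleaner.
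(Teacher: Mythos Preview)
Your proof is correct and takes essentially the same approach as the paper: the variational characterization of $Q_\la$ together with $E_Z\ge E_S(u)$ forces $K(u(t))=0\Rightarrow u(t)=0$, and uniqueness then propagates $u\equiv 0$. The only cosmetic difference is that the paper first records $J(Q)^2/4=E_S(Q)M(Q)$ and leaves the existence of $\la$ implicit (via the discriminant of the quadratic $\la^2 M(u)-\la J(Q)+E_Z$), whereas you select $\la_*=M(Q)/M(u_0)$ explicitly---just note that this choice tacitly assumes $M(u_0)>0$, the case $u_0=0$ being trivial.
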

\begin{proof}
From \eqref{eq:Qconst}, we have $J(Q)=\inf_{\la>0}J(Q_\la)$, and
thus $\partial_\la |_{\la=1} J(Q_\la)=0$. This implies
\[J^2(Q)/4 = E_S(Q)M(Q).\]
Thus we see that there exists $\la>0$ such that \EQ{
 E_Z(u,N)+\la^2M(u)<J_\la(Q_\la)=\la J(Q).}

Since $J_\la(u)\le E_Z(u,N)+\la^2M(u)$, 
by the variational characterization of $Q_\la$, we have at each $t\in I$, 
\EQ{
 K(u(t))=0 \iff u(t)=0.}
If $K(u(t_0))=0$ for some $t_0\in I$, by uniqueness
we have $u\equiv 0$. 
\end{proof}

\begin{cor}\label{cor:cor}
Assume that $(u,N)$ is a solution to \eqref{eq:Zak2} with maximal
interval $I$ satisfying for some $\la>0$ \EQ{\label{eq:Econst}
E_Z(u,N)+\la^2M(u)<\la J(Q),\ K(u_0)\geq 0.} Then $I=(-\I,\I)$, and 
moreover,
\begin{align}\label{eq:Eequiv}
E_Z(u,N)+\la^2M(u)\sim \norm{u}_{H^1}^2+\norm{N}_{L^2}^2\sim
\norm{u_0}_{H^1}^2+\norm{N_0}_{L^2}^2.
\end{align}
where the implicit constant depends only on $\la$ and $J(Q)$.
\end{cor}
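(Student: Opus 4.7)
The plan is to first establish that $K(u(t))\ge 0$ persists throughout the maximal interval $I$, then exploit an algebraic identity that converts this sign into a coercive lower bound on $J_\la(u)$, and finally recover $N$ via the decomposition
\EQ{ E_Z(u,N)+\la^2M(u)=J_\la(u)+\tfrac14\|N-|u|^2\|_2^2 }
already used in the proof of Lemma \ref{lem:K2sgn}. Global existence will then follow from conservation of $E_Z$ and $M$ together with the $H^1\times L^2$ local well-posedness theory cited in the introduction.

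First I would note that $t\mapsto K(u(t))$ is continuous by the $H^1$-continuity of the flow, while Lemma \ref{lem:K2sgn} asserts under \eqref{eq:Econst} that $K(u(t_0))=0$ forces $u\equiv 0$. Hence either $u\equiv 0$---in which case $N=e^{it\al D}N_0$ and \eqref{eq:Eequiv} is immediate---or $K(u(t))>0$ for every $t\in I$. In the latter case a direct computation in $d=3$ yields the identity
\EQ{ J_\la(\fy)=\tfrac13 K(\fy)+\tfrac16\|\na\fy\|_2^2+\tfrac{\la^2}{2}\|\fy\|_2^2, }
so that $K(u(t))\ge 0$ gives a coercive bound $J_\la(u(t))\gtrsim \|u(t)\|_{H^1}^2$ with implicit constant depending on $\la$. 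Combined with the decomposition above and the hypothesis \eqref{eq:Econst}, this controls both $\|u(t)\|_{H^1}$ and $\|N(t)-|u(t)|^2\|_2$ uniformly in $t$ by constants depending on $\la$ and $J(Q)$, and $\|N\|_2$ is then recovered from the triangle inequality $\|N\|_2\le\|N-|u|^2\|_2+\|u\|_4^2$ together with the Sobolev embedding $H^1\hookrightarrow L^4$ on $\R^3$.

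Since $E_Z$ and $M$ are conserved, the resulting bound is uniform in $t\in I$, so that $\sup_{t\in I}(\|u(t)\|_{H^1}^2+\|N(t)\|_{L^2}^2)<\I$, and the local theory prevents the energy norm from blowing up at any finite endpoint, giving $I=\R$. The reverse inequality in \eqref{eq:Eequiv} is immediate from the definitions together with Sobolev, once the quartic term $\|u\|_4^4$ is absorbed using the already established $H^1$ bound. The only step that genuinely uses dynamical information is the sign persistence of $K$, which is precisely the content of Lemma \ref{lem:K2sgn}; everything else reduces to algebra and a static Sobolev embedding, so no serious obstacle is anticipated.
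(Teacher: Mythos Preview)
Your proof is correct and follows essentially the same approach as the paper: both rely on Lemma \ref{lem:K2sgn} for the sign persistence of $K$, then use the identity $E_Z(u,N)+\la^2M(u)-\tfrac13K(u)=\tfrac16\|\na u\|_2^2+\tfrac{\la^2}{2}\|u\|_2^2+\tfrac14\|N-|u|^2\|_2^2$ (which is precisely your $J_\la$ identity combined with the $E_Z=E_S+\tfrac14\|N-|u|^2\|_2^2$ decomposition) together with Sobolev to recover $\|N\|_2$ and conclude. Your treatment of the reverse inequality in \eqref{eq:Eequiv} is in fact slightly more explicit than the paper's.
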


\begin{proof}
From Lemma \ref{lem:K2sgn} (b) we get that if $K(u_0)=0$, then
$u\equiv 0$, and hence this case is trivial. Thus we may assume
$K(u_0)>0$, hence $K(u(t))>0$ by Lemma \ref{lem:K2sgn} (b). From
the assumption, we get \eqref{eq:Eequiv} immediately from
\begin{align*}
\la J(Q)\geq& E_Z(u,N)+\la^2M(u)-K(u(t))/3\\
=&\frac{1}{6}\|\na u\|_2^2+\fr {\la^2}2\|u\|_2^2+\frac
14\|N-|u|^2\|_2^2,
\end{align*}
and the Sobolev inequality $\norm{u}_{L^4}\les \norm{u}_{H^1}$. So $(u,N)(t)$ is a priori bounded in $H^1\times
L^2$, and thus by the local wellposedness we have $I=(-\I,\I)$.
\end{proof}

So far, the global well-posedness of part (a) of Theorem \ref{thm} is
proved. It remains to prove the scattering and part (b). For both
purposes, the virial estimates play crucial roles. Unlike the NLS
case, it is not at all obvious that virial for \eqref{eq:Zak2} is
monotone. The following lemma is our key observation

\begin{lem}\label{lem:var}
Let $\fy\in H^1(\R^3)$, $\la>0$ and $\nu\ge 0$ satisfy \EQ{
 E_S(\fy)+\la^2M(\fy)+\frac{\nu^2}{4} \le J_\la(Q_\la).}
Then we have \EQ{
 \CAS{K(\fy) \ge 0 \implies 4K(\fy) + \nu^2 \ge \sqrt{6}\nu\|\fy\|_4^2,\\
 K(\fy)\le 0 \implies 4K(\fy) + \nu^2 \le -2\nu\|\fy\|_4^2.}}
\end{lem}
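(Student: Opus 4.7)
The plan is to exploit the $L^2$-preserving scaling $\fy_\mu(x) := \mu^{3/2}\fy(\mu x)$ and the variational characterization \eqref{eq:Qconst} of $Q_\la$, converting the hypothesis into a sharp upper bound on $\nu^2$ that can then be matched to the desired quadratic-in-$\nu$ inequalities. Throughout I assume $\fy \ne 0$ (so that $b := \|\fy\|_4^4 > 0$), the other case being a degenerate limit. First I would study the cubic
\[
 g(\mu) := J_\la(\fy_\mu) = \tfrac{\mu^2}{2}\|\na\fy\|_2^2 - \tfrac{\mu^3}{4}\|\fy\|_4^4 + \tfrac{\la^2}{2}\|\fy\|_2^2,
\]
whose unique positive critical point is the maximum $\mu_* := 4\|\na\fy\|_2^2/(3b)$, and at which $K(\fy_{\mu_*}) = 0$. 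The rescaled variational principle $\la J(Q) = \inf\{J_\la(\psi) : \psi \ne 0,\ K(\psi)=0\}$ therefore gives $g(\mu_*) \ge \la J(Q)$, and combined with the hypothesis this yields $\nu^2 \le 4[g(\mu_*) - g(1)]$. Since $g$ is cubic, Taylor expansion at $\mu_*$ is exact; using $g'(\mu_*) = 0$, $g''(\mu_*) = -\|\na\fy\|_2^2$, $g''' \equiv -\tfrac32 b$, one obtains the clean identity
\[
 g(\mu_*) - g(1) = \frac{b\, t^2 (t+3)}{8}, \qquad t := \mu_* - 1 = \frac{4K(\fy)}{3b},
\]
so the key estimate entering the final algebra is $\nu^2 \le \tfrac12 b\,t^2(t+3)$.

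Next I would rewrite each target inequality as a quadratic in $\nu$ (using $4K = 3bt$) and check that the $\nu^2$-bound places $\nu$ in the region where the quadratic has the correct sign. In case (a), $t \ge 0$: the inequality $\nu^2 - \sqrt{6b}\,\nu + 3bt \ge 0$ is automatic when $t \ge 1/2$ (the discriminant $6b(1-2t)$ is nonpositive); for $t \in [0, 1/2]$ it suffices to bound $\nu$ by the smaller root $\tfrac12\sqrt{6b}(1-\sqrt{1-2t})$, and via the bound on $\nu^2$ this reduces to the scalar inequality $t\sqrt{(t+3)/3} + \sqrt{1-2t} \le 1$ on $[0,1/2]$. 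The substitution $u = \sqrt{(t+3)/3} \ge 1$ followed by one squaring turns this into the trivial $3u^2(u+1) \ge 2$. In case (b), $-1 \le t \le 0$: the inequality $\nu^2 + 2\sqrt{b}\,\nu + 3bt \le 0$ holds iff $\nu \le \sqrt{b}(\sqrt{1-3t}-1)$, and with $s := -t \in [0,1]$ this reduces to $2\sqrt{1+3s} \le 2 + 3s - \tfrac12 s^2(3-s) =: W(s)$. Squaring produces the explicit factorization
\[
 W(s)^2 - 4(1+3s) = \tfrac14\, s^2(s-1)^2\bigl((s-2)^2 + 8\bigr),
\]
visibly nonnegative on $[0,1]$.

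The conceptual heart of the argument is the sharp cubic identity $g(\mu_*) - g(1) = \tfrac18 b t^2(t+3)$, which converts the variational defect $\la J(Q) - J_\la(\fy)$ into a quantity directly comparable with the two quadratics in $\nu$ appearing in the lemma. The main obstacle is then lining up the constants: a naive AM--GM bound on $\sqrt{6b}\,\nu$ in terms of $\nu^2$ and $b$ is too lossy, so one really needs both the exact cubic Taylor identity and the explicit square-root manipulations above to hit the precise constants $\sqrt6$ and $2$ appearing in the lemma statement.
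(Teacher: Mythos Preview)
Your proof is correct and follows essentially the same strategy as the paper: reduce to $\la=1$ by scaling, use the $L^2$-invariant dilation $\fy\mapsto\mu^{3/2}\fy(\mu\cdot)$ together with the variational characterization of $Q$ to obtain the sharp bound $\nu^2\le\frac{b}{2}t^2(t+3)$ (equivalently the paper's $\frac{(\mu-1)^2(\mu+2)}{2}\|\fy\|_4^4$), and then verify the two target inequalities. The only difference is in the final algebra: the paper introduces $X=\|\fy\|_4^2/\nu$ and minimizes $f(X,\mu)=3(\mu-1)X+1/X$ over the constraint $|\mu-1|\sqrt{\mu+2}\,X\ge\sqrt2$, splitting into the boundary case (leading to the monotone function $b(\mu)$ with $b(1)=\sqrt6$, $b(0)=2$) and an interior AM--GM case, whereas you treat the targets directly as quadratics in $\nu$ and close via explicit polynomial factorizations. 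Both routes are equivalent reformulations of the same estimate; yours is a bit more computational but avoids the case split at $\mu=(\sqrt{33}-1)/2$.
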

\begin{proof}
First, if $K(\fy)=0$ then $\nu=0$ and the conclusion is trivial.
Hence we may assume $K(\fy)\not=0$ as well as $\nu>0$. Next by the
scaling $(\fy,\nu)\mapsto(\la\fy(\la x),\sqrt{\la}\nu)$, we may
remove $\la$ or assume $\la=1$. Then the energy constraint becomes
$J(\fy)+\nu^2/4\le J(Q)$. Now consider the $L^2$ scaling of $\fy$, $S_\mu\fy=\mu^{d/2}\fy(\mu x)$
and \EQ{
 \pt J(S_\mu\fy)=\frac{\mu^2}{2}\|\na\fy\|_2^2+\frac{1}{2}\|\fy\|_2^2-\frac{\mu^3}{4}\|\fy\|_4^4,
 \pr \mu\p_\mu J(S_\mu\fy)=K(S_\mu\fy)=\mu^2\|\na\fy\|_2^2-\frac{3\mu^3}{4}\|\fy\|_4^4.}
There is a unique $0<\mu\not=1$ such that \EQ{ \label{eq mu}
 \|\na\fy\|_2^2=\frac{3\mu}{4}\|\fy\|_4^4,}
which is equivalent to $K(S_\mu\fy)=0$. Then the variational
characterization of $Q$ implies $J(S_\mu\fy)\ge J(Q)$, and so \EQ{
 \frac{\nu^2}{4} \le J(S_\mu\fy)-J(\fy)
  \pt=\frac{\mu^2-1}{2}\|\na\fy\|_2^2-\frac{\mu^3-1}{4}\|\fy\|_4^4,
 \pr=\frac{(\mu-1)^2(\mu+2)}{8}\|\fy\|_4^4,}
where \eqref{eq mu} is used in the last step. Let
$X:=\|\fy\|_4^2/\nu$. Then the above inequality is rewritten as \EQ{
\label{X lowbd}
 |\mu-1|\sqrt{\mu+2}X \ge \sqrt{2}.}
Hence it suffices to estimate, under the above constraint, \EQ{
 \frac{4K(\fy)+\nu^2}{\nu\|\fy\|_4^2}=3(\mu-1)X+1/X=:f(X,\mu).}
For $K(\fy)>0$, or equivalently $\mu>1$, $f(X,\mu)$ is increasing
in $X$ unless \EQ{
 \sqrt{\frac{1}{3(\mu-1)}}<\frac{1}{\mu-1}\sqrt{\frac{2}{\mu+2}},}
which is solved $\mu>(\sqrt{33}-1)/2$. In the latter case, we have
\EQ{
 3(\mu-1)X+1/X \ge 2\sqrt{3(\mu-1)X/X}>\sqrt{6},}
since $\mu>3/2$. Otherwise, the minimum is attained at the boundary
and equal to \EQ{
 f(\frac{1}{\mu-1}\sqrt{\frac{2}{\mu+2}},\mu)=3\sqrt{\frac{2}{\mu+2}}+(\mu-1)\sqrt{\frac{\mu+2}{2}}=:b(\mu),}
which is increasing\footnote{This can be checked by computing
$\frac{d(b^2)}{d\mu}$.} in $\mu>0$, hence $b(\mu)>b(1)=\sqrt{6}$.

For $K(\fy)<0$, or equivalently $0<\mu<1$, $-f(X,\mu)$ is
increasing in $X$, so its minimum is attained at the boundary and
equals to \EQ{
 -f(\frac{1}{1-\mu}\sqrt{\frac{2}{\mu+2}},\mu)=b(\mu)>b(0)=2.}
Therefore, the proof of the lemma is completed.
\end{proof}

\begin{rem} Applying the lemma above by letting \EQ{
 \nu:=\|N-|u|^2\|_2,}
we get from Lemma \ref{lem:virial} that the virial $\LR{\mathbf{\UJ}
v|Av}$ is monotone in our consideration. This fact will play crucial
role in our consequent analysis.
\end{rem}

\section{Growup at infinity}\label{sec:growup}

This section is devoted to prove part (b) of Theorem \ref{thm}. We
assume that under the assumption of part (b), the solution exists
for all $t>0$. We will show that $\sup\limits_{t>0}\norm{(u,N)}_{H^1(\R^3)\times L^2(\R^3)}=\I$.

\subsection{Localized virial} Let $X=X^*$ be the operator of smooth
trancation to $|x|<R$ by multiplication with $\psi_R(x)=\psi(x/R)$, where $\psi\in C_0^\I(\R^3)$ is a fixed radial function satisfying $0\le\psi\le 1$, $\p_r\psi\le 0$, $\psi(x)=1$ for $|x|\le 1$ and $\psi(x)=0$ for $|x|\ge 2$. 
We consider the localized virial quantity in the form \EQ{
 V_R(t):=\LR{\UJ v|(AX+XA)v}.}
Then similarly to the non-localized virial identity, we can compute
\EQ{ \label{local virial}
 \dot V_R=\LR{E_Z'(v)|(AX+XA+AJX\UJ+J^*X\UJ^*A)v}.}
Putting $\nu:=n-|u|^2$, the right hand side can be written
componentwise \EQ{
 \dot V_R=\pt\LR{E_S'(u)-\nu u|2A_0Xu+2XA_0u}
 \pr+\LR{\nu/2|(XA_1+A_1X+DXD^{-1}A_1+A_1DXD^{-1})(\nu+|u|^2)},}
where $A_j:=x\cdot\na+(d+j)/2$. The right hand side is decomposed
into the NLS part: \EQ{
 NS:=\LR{E_S'(u)|2A_0Xu+2XA_0u},}
the quadratic terms in $\nu$: \EQ{
 QN:=\LR{\nu/2|(XA_1+A_1X+DXD^{-1}A_1+A_1DXD^{-1})\nu},}
and the cubic cross terms: \EQ{
 CC:=\pt\LR{-\nu u|2A_0Xu+2XA_0u}
 \pr+\LR{\nu/2|(XA_1+A_1X+DXD^{-1}A_1+A_1DXD^{-1})|u|^2},}
i.e.,~$\dot V_R=NS+QN+CC$. Since the NLS part has been treated by
Ogawa-Tsutsumi \cite{OgT} and Holmer-Roudenko \cite{HR}, while the cross terms are higher
order, the main problem for us is to control $QN$. Indeed, our way of the localization is motivated by a better cancellation in $QN$, while some other multipliers such as $AXv$ in \eqref{local virial} could make the other terms simpler. 

It is further
decomposed $QN=(QN_1+QN_2+QN_3)/2$ with \EQ{
 QN_1:=\LR{\nu|(XA_1+A_1X)\nu}=\LR{\nu|X(A_1+A_1^*)\nu}=\LR{\nu|X\nu},}
where we used the symmetry of the bilinear form as well as $X=X^*$
and $A_0=-A_0^*$. Putting $\y:=D^{-1}\nu$, the other two terms are
computed as follows. \EQ{
 QN_2:\pt=\LR{\nu|DXD^{-1}A_1\nu}=\LR{\y|D^2XA_{-1}\y}=\LR{\na\y|\na XA_{-1}\y}
 \pr=\LR{\na\y|XA_1\na\y}+\LR{\na\y|(\na\psi_R)A_{-1}\y},}
where we used $DA_{-1}=A_{1}D$ and $\na A_{-1}=A_{1}\na$, \EQ{
 QN_3:=\pt=\LR{\nu|A_1DXD^{-1}\nu}=\LR{\y|DA_1DX\y}=\LR{\na\y|A_1\na X\y}
 \pr=\LR{\na\y|A_1X\na\y}+\LR{\na\y|A_1(\na\psi_R)\y},}
where we used $DA_1D=-\na\cdot A_1\na$. Hence \EQ{
 QN_2+QN_3\pt=\LR{\na\y|X(A_1+A_1^*)\na\y}+\LR{\na\y|(\na\psi_R)A_{-1}\y+A_1(\na\psi_R)\y}
 \pr=\LR{\na\y|X\na\y}+2\LR{\na\y|(\na\psi_R)x\cdot\na\y}+\LR{\na\y|\y A_{d}\na\psi_R}
 \pr=\LR{\na\y|X\na\y}+2\LR{\y_r|r\psi_R'\y_r}-\frac12\LR{|\y|^2|A_{d-2}\De\psi_R},}
where we used the radial symmetry of $\psi_R$ but not of $\y$. Thus
we obtain \EQ{
 QN=\LR{\nu|X\nu}/2 + \LR{\na\y|X\na\y}/2 + \LR{\y_r|r\psi_R'\y_r}-\frac14\LR{|\y|^2|A_{d-2}\De\psi_R}.}
The first two terms are less than $\|\nu\|_2^2=\|\na\y\|_2^2$ since
$\psi_R\le 1$, while the third term is nonpositive since $\psi_R'\le
0$. The last term is bounded from above and below by\footnote{Such an error term does not occur in Merle's virial identity \cite{Merle}. This is a disadvantage of our identity. Nevertheless we can dispose of it using the evolution equation.} 
\EQ{
 \ro_R:=\int_{|x|\sim R}\frac{|\y|^2}{R^2}dx \lec \|\na\y\|_2^2=\|\nu\|_2^2.}
In short, we have \EQ{
 QN(t) \le \|\nu\|_2^2 + O(\ro_R(t)).}
$\ro_R(t)\to 0$ as $R\to\I$ for each fixed $t$, but some uniform
decay is needed for the main term $\dot
V_\I(t)=4K+\|\nu\|_2^2+(1-d)\LR{\nu||u|^2}$ to absorb the error.
For that we use the equation of $\y$: \EQ{
 (i\p_t+\al D)\y=D^{-1}(i\p_t+\al D)(N-|u|^2)=-iD^{-1}|u|^2_t,}
and the corresponding integral equation \EQ{
 \y\pt=\y^0+\y^1,\pq \y^0:=e^{i\al Dt}\y(0),
 \\\y^1&:=-i\int_0^t\frac{\sin(\al D(t-s))}{D^2}|u(s)|^2_sds
 \pr=-i\frac{\sin(\al Dt)}{D^2}|u(0)|^2-i\al\left[\int_0^{(t-1)_+}+\int_{(t-1)_+}^t\right]\frac{\cos(\al D(t-s))}{D}|u(s)|^2ds
 \pr=:\y^2+\y^3+\y^4.}
We use the above equation only for very low frequency. More
precisely, with a small parameter $0<\de<1$ independent of $t$,
decompose $\y$ smoothly in the Fourier space \EQ{
 \y=\y_{<\de}+\y_{>\de}, \pq \y_{<\de}:=\F^{-1}\psi_\de\F\y,}
then we have $\|\y_{>\de}\|_2\le\de^{-1}\|\nu\|_2$. For the low
frequency part, we have \EQ{
 \pt \|\y^0_{<\de}\|_{\dot H^1}=\|\nu_{<\de}(0)\|_{L^2},
 \pr \|\y^2_{<\de}\|_{\dot H^{1/2+}}\lec \||u(0)|^2\|_{L^1}\lec \|u(0)\|_2^2,
 \pr \|\y^4_{<\de}\|_{\dot H^{-1/2+}}\lec \al\||u|^2\|_{L^\I_tL^1_x}\lec \al\|u(0)\|_2^2,
 \pr \|\y^3_{<\de}\|_{L^{4+}}\lec \|\y^3_{<\de}\|_{\dot B^{-1}_{\I,\I}}^{1/2}\|\y^3_{<\de}\|_{\dot H^1}^{1/2},}
and by the $L^\I$ decay of the wave equation, \EQ{
 \|\y^3_{<\de}\|_{\dot B^{-1}_{\I,\I}}
 \lec \int_0^{(t-1)_+}\frac{1}{|t-s|}\|u(s)\|_2^2ds \lec \|u(0)\|_2^2\log(t+1).}
Thus we obtain \EQ{
 \|\y^1_{<\de}\|_{L^\I_t(0,T;L^{4+}_x)} \lec \|u(0)\|_2^2\log(T+2)+\|\nu\|_{L^\I_t(0,T;L^2_x)}^2,}
and so \EQ{
 \sup_{0<t<T}\ro_R(t) \lec \pt\|\nu_{<\de}(0)\|_2^2+R^{-1/2+}[\|u(0)\|_2^2\log(T+2)+\|\nu\|_{L^\I_t(0,T;L^2_x)}^2]
 \pr+(\de R)^{-2}\|\nu\|_{L^\I_t(0,T;L^2_x)}^2.}

Next we estimate the cubic cross terms \EQ{
 \pt CC=CC_1+CC_2+CC_3,
 \pr CC_1:=-2\LR{\nu u|A_0Xu+XA_0u}=-2\LR{\nu|(r\psi_R'+XA_d)|u|^2},
 \pr CC_2:=\frac12\LR{\nu|(XA_1+A_1X)|u|^2}=\LR{\nu|(XA_1+r\psi_R'/2)|u|^2},
 \pr CC_3:=\frac12\LR{\nu|(DXD^{-1}A_1+A_1DXD^{-1})|u|^2}.}
For the last term we use the commuting relations: \EQ{
 A_1DXD^{-1}=DA_{-1}XD^{-1}\pt=D(XA_{-1}+r\psi_R')D^{-1}
 \pr=DXD^{-1}A_1+Dr\psi_R'D^{-1},}
and so \EQ{
 \pt CC_3=\LR{\nu|XA_1|u|^2}+CC_3',
 \pr CC_3':=\LR{\nu|([D,X]D^{-1}A_1+[D,r\psi_R']D^{-1}/2)|u|^2}.}
Hence \EQ{
 CC=(1-d)\LR{\nu||u|^2}+\LR{\nu|[(1-d)(\psi_R-1)-3r\psi_R'/2]|u|^2}+ CC_3',}
and the second term on the right is bounded by \EQ{
 \int_{|x|\gec R}|\nu u^2|dx \pt\lec \|\nu\|_2\|u\|_2\|u\|_{L^\I(|x|\gec R)}
 \pn\lec R^{-1}\|\nu\|_2\|u\|_2^{3/2}\|\na u\|_2^{1/2},}
since the functions in the brackets $[]$ vanish on $|x|\lec R$. We used the
radial Sobolev inequality \EQ{
 \fy(x)=\fy(|x|)\in H^1(\R^3)\implies \|r\fy\|_{L^\I(\R^3)} \lec \|\fy\|_2^{1/2}\|\na\fy\|_2^{1/2}.}
For the commutator terms $CC_3'$, we use the elementary commutator
estimate \EQ{
 \|[D,f]g\|_{L^2} \lec \|\F(\na f)\|_{L^1}\|g\|_{L^2},}
together with the (radial/nonradial) Sobolev \EQ{
 \pt \|xu^2\|_2 \le \|xu\|_\I \|u\|_2 \lec \|u\|_2^{3/2}\|\na u\|_2^{1/2},
 \pr \|D^{-1}|u|^2\|_2 \lec \||u|^2\|_{6/5} \le \|u\|_2\|u\|_3 \lec \|u\|_2^{3/2}\|\na u\|_2^{1/2}.}
Since $\|\F(\na\psi_R)\|_1=CR^{-1}$, we thus obtain \EQ{
 |CC_3'| \lec \|\nu\|_2R^{-1}[\|D^{-1}\na\cdot x|u|^2\|_2+\|D^{-1}|u|^2\|_2]
 \lec R^{-1}\|\nu\|_2\|u\|_2^{3/2}\|\na u\|_2^{1/2}.}
In short, we have obtained \EQ{
 CC = (1-d)\LR{\nu||u|^2}+O(R^{-1}\|\nu\|_2\|u\|_2^{3/2}\|\na u\|_2^{1/2}).}

Finally we estimate the NLS part \EQ{
 NS/2 \pt=\LR{-\De u-|u|^2u|A_0Xu+XA_0u}
 \pr=\LR{\na u|\na(A_0X+XA_0)u}-\LR{r\psi_R'||u|^4}-\LR{\psi_R|(r\p_r/2+d)|u|^4}\pr=:NS_1+NS_2+NS_3.}
For the first term $NS_1$ we use \EQ{
 \na(A_0X+XA_0)\pt=A_2\na X+X\na A_0+[\na\psi_R]A_0
 \pr=A_2X\na+A_2[\na\psi_R]+XA_2\na+[\na\psi_R]A_0
 \pr=(A_2X+XA_2)\na+2[\na\psi_R]r\p_r+[A_{2+d}\na\psi_R],}
where the bracket denotes the multiplication with the inside
function. Using $A_0^*=-A_0$ as well, we obtain \EQ{
 NS_1=\LR{\na u|2X\na u}+2\LR{u_r|\psi_R'ru_r}+\frac12\LR{\na|u|^2|A_{2+d}\na\psi_R}.}
Since $\psi_R\le 1$ and $\psi_R'\le 0$, the first term is less than
$2\|\na u\|_2^2$ and the second is nonpositive. The last term equals
\EQ{
 -\frac 12\LR{|u|^2|A_d\De\psi_R} \lec\|u\|_2^2\|A_d\De\psi_R\|_\I\lec R^{-2}\|u\|_2^2.}
The quartic terms equal \EQ{
 NS_2+NS_3=-\frac 12\LR{(r\p_r+d)\psi_R||u|^4}=-\frac{d}{2}\|u\|_4^4-\frac 12\LR{r\psi_R'+d(\psi_R-1)||u|^4},}
and the last term is bounded by \EQ{
 \|u\|_{L^4(|x|\gec R)}^4 \le \|u\|_2^2\|u\|_{L^\I(|x|\gec R)}^2
 \lec R^{-2}\|u\|_2^3\|\na u\|_2,}
using the radial Sobolev inequality. In short, we have obtained \EQ{
 NS/2 \le 2K(u) + O(R^{-2}\|u\|_2^3\|\na u\|_2).}

Gathering the above estimates on $QN$, $CC$ and $NS$, we obtain \EQ{
 \dot V_R \pt\le 4K(u) + \|\nu\|_2^2 + (1-d)\LR{\nu||u|^2} + O(\ro_R)
 \pr\pq+ O(R^{-1}\|\nu\|_2\|u\|_2^{3/2}\|\na u\|_2^{1/2}) + O(R^{-2}\|u\|_2^3\|\na u\|_2),}
and \EQ{
 \sup_{0<t<T}\ro_R \lec \pt\|\nu_{<\de}(0)\|_2^2+R^{-1/2+}[\|u(0)\|_2^2\log(T+2)+\|\nu\|_{L^\I_t(0,T;L^2_x)}^2]
 \pr+(\de R)^{-2}\|\nu\|_{L^\I_t(0,T;L^2_x)}^2.}
Also we have \EQ{
 |V_R| \lec R[\|u\|_2\|\na u\|_2+\|N\|_2^2].}

Now suppose for contradiction that \EQ{
 \sup_{t>0}\|u(t)\|_{H^1_x}+\|N(t)\|_{L^2_x}\le M\in[1,\I),}
then $\|\nu\|_{L^2_x}\lec M^2$ and $|V_R|\lec RM^2$. The variational
estimate provides us with an upper bound \EQ{
 \dot V_\I= 4K(u) + \|\nu\|_2^2 + (1-d)\LR{\nu||u|^2} \le -\ka}
for some $\ka\sim J_\la(Q_\la)-[E_Z(v)+\la^2M(u)]>0$. We can first
choose $0<\de\ll 1$ so small that $\|\nu_{<\de}(0)\|_2^2\ll\ka$.
Secondly we can choose $R\gg 1$ so large that \EQ{
 \pt R^{-1/3}M^2\log(RM^2/(\de\ka)) \ll \ka,
 \pq (R^{-1/3}+(\de R)^{-2})M^4 \ll \ka,}
where $\log(RM^2/(\de\ka))$ may be replaced with
$(RM^2/(\de\ka))^{1/6}$ for example. Then for $0<t<RM^2/\de\ka=:T$,
we have $\dot V_R\le -\ka/2$, and so $|V_R(T)-V_R(0)|\ge \ka
T/2=\frac{RM^2}{2\de}$, which is contradicting the above bound on
$|V_R|$.

\section{Concentration-compactness procedure}

It remains to prove the scattering in part (a) of Theorem \ref{thm}.
Thanks to the variational estimates in Section 2, we can proceed as
Kenig-Merle. For each $0\le a\le J(Q)$ and $\la>0$, let \EQ{
\pt\mathscr{E}_\la (f,g):=\la^{-1}E_Z(f,g)+\la M(f),
 \pr \K^+_\la(a):=\{(f,g)\in H^1_r\times L^2_r\mid \mathscr{E}_\la (f,g)<a,\ K(f)\ge0\},
 \pr \cS_\la(a):=\sup\{\|(u,N)\|_S \mid (u(0),N(0))\in\K^+_\la(a),\ \text{$(u,N)$ sol.}\},}
where $S$ denotes a norm containing almost all the Strichartz norms for radial free solutions, including $L^\I_t(H^1\times L^2)$. See \eqref{def S} for the precise definition. For any time interval $I$, we will denote by $S(I)$ the restriction of $S$ onto $I$. 

From Corollary \ref{cor:cor} we already know that all solutions
starting from $\K^+_\la(a)$ stays there globally in time. What we
want to prove is the uniform scattering below the ground state
energy, i.e. $\cS_\la(a)<\I$ for all $a<J(Q)$.
Let \EQ{
 E_\la^* :=\sup\{a>0 \mid \cS_\la(a)<\I\}.}
The small data scattering in \cite{GN} implies that $E_\la^*>0$, and
the existence of the ground state soliton implies that $E_\la^*\le
J(Q)$. We will prove  $E_\la^*= J(Q)$ by contradiction, and thus
finish the proof of Theorem \ref{thm} (a). The main result in this
section is
\begin{lem}[Existence of critical element]\label{lem:crit}
Suppose $E_\la^*< J(Q)$, then there is a global solution $(u,N)$ in
$\K^+_\la(a)$ satisfying
 \EQ{
 \mathscr{E}_\la (u,N)=E_\la^*,\ \ \ \|(u,N)\|_{S(-\I,0)}=\|(u,N)\|_{S(0,\I)}=\I.
 }
 Moreover, $\{(u,N)(t)\ |\ t\in\mathbb{R}\}$ is precompact in $H^1_x\times L_x^2$.
\end{lem}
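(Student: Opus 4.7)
The plan is to execute the Kenig-Merle concentration-compactness scheme in the Zakharov setting, combining the normal-form machinery of \cite{GN} with the variational tools of Section 2. I start by choosing a minimizing sequence of radial solutions $(u_n, N_n)$ with initial energies $\mathscr{E}_\la(u_n(0), N_n(0)) \downarrow E_\la^*$ whose $S$-norms diverge. After a time translation and a subsequence extraction, I may assume that both $\|(u_n, N_n)\|_{S(-\I,0)}$ and $\|(u_n, N_n)\|_{S(0,\I)}$ tend to infinity; by Corollary \ref{cor:cor}, these solutions are global and bounded a priori in $H^1 \times L^2$.

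Next I would establish a linear profile decomposition for the initial data in $H^1_r \times L^2_r$ adapted to the two free flows. Since radial symmetry kills spatial translations, the admissible parameters are the joint $L^2$-scaling $S_{\la_n^j}$ from Section 2 together with independent time-translation parameters $(t_n^{j,1}, t_n^{j,2})$ for the Schrodinger and wave components:
\begin{equation*}
 (u_n(0), N_n(0)) = \sum_{j=1}^{J} S_{\la_n^j}\bigl(e^{-it_n^{j,1}\De}\fy^j,\ e^{it_n^{j,2}\al D}\psi^j\bigr) + (r_n^J, \rh_n^J),
\end{equation*}
with asymptotically orthogonal parameters and remainder vanishing in the free $S$-norm as $J,n \to \I$. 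The Schrodinger half is a Keraani-type refinement; the wave half is a Bahouri-Gerard extraction on the radial wave flow, with the radial-improved Strichartz estimates of \cite{GN} substituting for the missing spatial translations. A Pythagorean decoupling then propagates from $\|\cdot\|_{H^1 \times L^2}$ to $\mathscr{E}_\la$ and $K$ via Lemmas \ref{lem:K2sgn}--\ref{lem:var}, forcing each profile to lie in $\K^+_\la(E_\la^* + o_n(1))$.

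The contradiction step is standard in spirit. To each linear profile I attach a nonlinear Zakharov profile: a genuine solution when the time parameters stay bounded, and a wave-operator solution built from the small-data scattering of \cite{GN} when some $t_n^{j,k} \to \pm\I$. If every profile energy were strictly below $E_\la^*$, the definition of $E_\la^*$ would yield a finite $S$-norm for each nonlinear profile, and a long-time perturbation lemma for the Zakharov system would sum them into an approximate solution with the same data as $(u_n, N_n)$, contradicting the divergence of $\|(u_n, N_n)\|_S$. Hence exactly one profile saturates $E_\la^*$ and the remainder vanishes; its nonlinear evolution is the critical element $(u_c, N_c)$. For precompactness of the orbit, I rerun the decomposition on $(u_c(t_n), N_c(t_n))$ for an arbitrary sequence $t_n$: minimality again selects a single surviving profile with vanishing remainder, and the a priori $H^1 \times L^2$ bound together with conservation of $M(u_c)$ prevents $\la_n^1 \to 0$ or $\la_n^1 \to \I$, giving honest compactness in $H^1_r \times L^2_r$.

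I expect the main obstacle to be the long-time perturbation lemma. The normal-form reduction of \cite{GN} trades the strong nonlinearities of \eqref{eq:Zak2} for a reduced system with boundary terms that are \emph{not} time-integrated, so the classical Schrodinger-style perturbation argument cannot be invoked verbatim; I would need to redo that step in the reduced variables, controlling the boundary corrections by the ambient $S$-norm and the sub-threshold bound of Corollary \ref{cor:cor}. A secondary subtlety is the coupling of scales in the two components of the profile decomposition: the $N$-profiles sit at the $L^2$-critical scale for the 3D wave flow, so the joint choice of $S_{\la_n^j}$ must be the one for which $\mathscr{E}_\la$ and $K$ decouple cleanly --- which is precisely what the Hamiltonian scaling of Section 2 is designed to do.
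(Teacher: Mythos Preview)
Your overall Kenig--Merle strategy matches the paper's, and you correctly single out the long-time perturbation lemma (with its non-integrated normal-form boundary terms) as the main technical hurdle; the paper indeed spends most of Section~4 on exactly this. However, your profile decomposition is set up in a way that would not work. The paper's decomposition (Lemma~\ref{free prof}) uses \emph{only} a single time-translation sequence $t_n^j$, shared by both components, and no scaling at all. This suffices because the problem is genuinely subcritical in $H^1_r\times L^2_r$: radial symmetry removes spatial translation, and the regularity room in both components removes scaling (your remark that the $N$-profiles sit at the $L^2$-critical scale for the 3D wave flow is incorrect---$L^2$ is subcritical for the half-wave in $\R^3$). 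Your scaling $S_{\la_n^j}$ is not merely superfluous: it does not commute with the joint free flow $U(t)=e^{-it\De}\oplus e^{it\al D}$, since Schr\"odinger time scales like $\la^2$ while wave time scales like $\la$ for fixed $\al$, so the object you write down is not a free concentrating wave for the Zakharov system.

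The more serious gap is your pair of independent time parameters $(t_n^{j,1},t_n^{j,2})$. If $|t_n^{j,1}-t_n^{j,2}|\to\I$ for some $j$, there is no way to attach a single nonlinear Zakharov profile to $(\fy^j,\psi^j)$: the coupled system requires both components synchronized at one initial (or final) time, and your wave-operator construction from \cite{GN} produces a solution of the full system, not of one decoupled half. The paper avoids this by extracting both components along the same $t_n^j$, which is legitimate precisely because in the subcritical radial setting time translation is the only source of non-compactness for either flow, and one runs the diagonal extraction on the pair $(f_n,g_n)$. Once you adopt this simpler decomposition, the rest of your outline agrees with the paper: variational control of each profile via $G_\la=J_\la-K/3$ to force $K\ge 0$, energy orthogonality leaving a single saturating profile, the cases $t_n^1\to\pm\I$ ruled out by stability against the free flow, and precompactness by rerunning the argument on time-translates of the critical element.
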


We will prove this lemma by following the concentration-compactness
procedure. The main difference from NLS is that we need to work with the solutions after the normal form transform. In particular, we have some nonlinear terms without time integration (or the Duhamel form). 
Besides that, we have various different interactions, for which we need to use
different norms or exponents.

\subsection{Profiles for the radial Zakharov}
First we recall the free profile decomposition of Bahouri-G\'erard
type \cite{BG}. Actually we do not need its full power, as we can freeze scaling and space positions of the profiles thanks to the radial symmetry and the regularity room of our problem. Hence the setting is essentially the same as the NLS case \cite{HR}. 
\begin{lem} \label{free prof}
For any bounded sequence $(f_n,g_n)$ in $H^1_r\times L^2_r$, there
is a subsequence $(f_n',g_n')$, $\bar{J}\in \N\cup \{\infty\}$, a
bounded sequence $\{\lp f^j,\lp g^j\}_{1\le j< \bar{J}}$ in
$H^1_r\times L^2_r$, and sequences $\{t_n^j\}_{n\in\N,1\le j<
\bar{J}}\subset\R$, such that the following holds. For any $0\leq
j\leq J<\bar{J}$, let \EQ{
 \pt u_n(t):=e^{-it\De}f_n', \pq N_n(t):=e^{it\al D}g_n',
 \pr \lp u_n^j(t):=e^{-i(t-t_n^j)\De}\lp f^j, \pq \lp N_n^j(t):=e^{i(t-t_n^j)\al D}\lp g^j,
 \pr \pe u_n := u_n - \sum_{j=1}^J \lp u_n^j, \pq \pe N_n := N_n - \sum_{j=1}^J \lp N_n^j.}
Then for any $j,k\in\{1 \ldots J\}$, we have
$t_\I^j:=\lim_{n\to\I} t_n^j \in \{0,\pm\I\}$,
\EQ{ \label{separation}
  j\not=k\implies \lim_{n\to\I}|t_n^j-t_n^k|=\I,}
\EQ{ \label{weak conv}
  \pt (\pe u_n,\pe N_n)(t_n^j)\to 0\ \text{weakly in $H^1\times L^2$} \text{as } n\to \I,
  \pr (\pe u_n,\pe N_n)(0)\to 0\ \text{weakly in $H^1\times L^2$} \text{as } n\to \I,}
and
\EQ{ \label{smallness}
 \lim_{J\to \bar{J}}\limsup_{n\to\I}
 [\|\pe u_n\|_{L^\I_t B^{-1/2-\de}_\I} + \|\pe N_n\|_{L^\I_t(\dot B^{-3/2-\de}_\I+\dot B^{-3/2+\de}_\I)}] =0.}
\end{lem}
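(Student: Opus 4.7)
The plan is to carry out a Bahouri--G\'erard type profile decomposition \cite{BG}, adapted to the product space $H^1_r\times L^2_r$. Thanks to the radial symmetry and to the room between the subcritical regularity $H^1\times L^2$ and the scaling-critical regularity, only time translations are needed as the symmetry group: no spatial translations or dyadic scalings have to be tracked, and the radial embedding of $H^1_r$ into weak-$L^p$ together with the radial-improved Strichartz estimates from \cite{GN} provides the refined dispersive inequalities we need.

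First I would establish the refined extraction step: if $(f_n,g_n)$ is bounded in $H^1_r\times L^2_r$ and
\[
\mu := \limsup_{n\to\I}\bigl[\|e^{-it\De}f_n\|_{L^\I_t B^{-1/2-\de}_\I}+\|e^{it\al D}g_n\|_{L^\I_t(\dot B^{-3/2-\de}_\I+\dot B^{-3/2+\de}_\I)}\bigr]>0,
\]
then there exist a time sequence $\{t_n\}\subset\R$ and a non-trivial $(\lp f,\lp g)\in H^1_r\times L^2_r$ such that, along a subsequence,
\[
(e^{it_n\De}f_n,\, e^{-it_n\al D}g_n)\rightharpoonup (\lp f,\lp g)\ \text{weakly in}\ H^1\times L^2,
\]
with $\|\lp f\|_{H^1}+\|\lp g\|_{L^2}\gec \mu^\theta$ for some $\theta>0$. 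This is obtained by selecting a time $t_n$ and a dyadic frequency at which the extremal Besov quantity is essentially attained and testing against a suitably normalized radial Schwartz function; the Besov exponents are calibrated exactly so that the testing produces a non-trivial weak limit in the target Sobolev space.

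Second, I would iterate: given profiles $\{\lp f^j,\lp g^j,t_n^j\}_{j=1}^J$, apply the refined extraction to the residuals $(\pe u_n(0),\pe N_n(0))$ to produce a new profile at stage $J+1$. After a diagonal subsequence one may assume $t_\I^j\in\R\cup\{\pm\I\}$; any finite limit is absorbed into the profile itself, reducing to $t_\I^j\in\{0,\pm\I\}$. The key accounting is the Pythagorean identity
\[
\|f_n'\|_{H^1}^2=\sum_{j=1}^J\|\lp f^j\|_{H^1}^2+\|\pe u_n(0)\|_{H^1}^2+o(1),\quad n\to\I,
\]
together with its $L^2$ analogue for $(g_n',\lp g^j,\pe N_n(0))$, which follows from the unitarity of $e^{-it\De}$ on $H^1$ and of $e^{it\al D}$ on $L^2$ combined with the time-separation \eqref{separation}, which makes cross terms vanish in the limit. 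Since each new profile satisfies $\|\lp f^j\|_{H^1}^2+\|\lp g^j\|_{L^2}^2\gec \mu_{j-1}^{2\theta}$, the Pythagorean identity forces $\mu_J\to0$ as $J\to\bar J$, giving \eqref{smallness}. Property \eqref{separation} is proved by contradiction: if $|t_n^j-t_n^k|$ stayed bounded for some $j<k$, the weak limit defining $\lp f^k$ would already have appeared at stage $j$, contradicting $\pe u_n(t_n^j)\rightharpoonup 0$, which in turn holds by construction and yields \eqref{weak conv}.

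The main obstacle is the refined extraction for the wave component $g_n$: since $e^{it\al D}$ disperses strictly less than the Schr\"odinger group, both very low and very high frequencies must be handled, which is why the hybrid Besov norm $\dot B^{-3/2-\de}_\I+\dot B^{-3/2+\de}_\I$ appears -- the two summands are tailored to control frequencies above and below the unit scale separately. Once this refined inequality is set up, the rest is a by-now standard adaptation of \cite{BG,HR} to the coupled Schr\"odinger--wave system.
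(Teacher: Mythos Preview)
The paper does not give a proof of this lemma; it simply recalls it as a Bahouri--G\'erard type decomposition, citing \cite{BG} and \cite{HR} and noting that the radial symmetry and the subcritical regularity allow one to freeze spatial translations and scalings so that only time shifts remain. Your outline is a correct implementation of precisely this standard argument, with the appropriate observation that the sum Besov norm $\dot B^{-3/2-\de}_\I+\dot B^{-3/2+\de}_\I$ is needed for the wave component to handle low and high frequencies separately.
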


\begin{rem}
1) \eqref{separation}--\eqref{weak conv} implies the linear orthogonality
\EQ{
 \pt \lim_{n\to\I}\|u_n(0)\|_{H^1}^2-\sum_{j=1}^J\|\lp u_n^j(0)\|_{H^1}^2-\|\pe u_n(0)\|_{H^1}^2 = 0,
 \pr \lim_{n\to\I} M(u_n(0))-\sum_{j=1}^J M(\lp u_n^j(0))-M(\pe u_n(0))=0,
 \pr \lim_{n\to\I}\|N_n(0)\|_{L^2}^2-\sum_{j=1}^J\|\lp N_n^j(0)\|_{L^2}^2-\|\pe N_n(0)\|_{L^2}^2 = 0,}
as well as the nonlinear orthogonality
\EQ{ \label{linear orth}
 \pt \lim_{n\to\I}\|u_n(0)\|_{L^4}^4-\sum_{j=1}^J\|\lp u_n^j(0)\|_{L^4}^4-\|\pe u_n(0)\|_{L^4}^4 = 0,
 \pr \lim_{n\to\I} E_S(u_n(0))-\sum_{j=1}^J E_S(\lp u_n^j(0))-E_S(\pe u_n(0))=0,
 \pr \lim_{n\to\I} K(u_n(0))-\sum_{j=1}^J K(\lp u_n^j(0))-K(\pe u_n(0))=0,
 \pr \lim_{n\to\I} E_Z(u_n(0),N_n(0))-\sum_{j=1}^J E_Z(\lp u_n^j(0),\lp N_n^j(0))-E_Z(\pe u_n(0),\pe N_n(0))=0.}
The same orthogonality holds also along $t=t_n^j$ instead of $t=0$.

2) The norms in \eqref{smallness} are related to the Sobolev embedding $L^2\subset \dot B^{-3/2}_\I$. Interpolation with the Strichartz estimate extends the smallness to any Strichartz norms as far as the exponents are not sharp either in $L^p$ or in regularity (including the low frequency of $N$).
\end{rem}

We call such a sequence of free solutions $\{(\lp u_n^j,\lp
N_n^j)\}_{n\in\N}$ a {\it free concentrating wave}. Now we introduce
the nonlinear profile associated to a free concentrating wave \EQ{
 (\lp{u}_n(t),\lp{N}_n(t))=U(t-t_n)(\lp{f},\lp{g}), \pq t_\I=\lim_{n\to\I}t_n\in\{0,\pm\I\},}
where $U(t)=e^{-it\De}\oplus e^{it\al D}$ denotes the free propagator. 
With it, we associate the {\it nonlinear profile} $(\np{u},\np{N})$,
defined as the solution of the Zakharov system satisfying \EQ{
 (u,N)=U(t)(\lp{f},\lp{g})+\int_{-t_\I}^t U(t-s)(nu,\al D|u|^2)(s)ds,}
which is obtained  by solving the initial data problem (if $t_\I=0$)
or by solving the final data problem (if $t_\I=\pm\I$). When
$t_\I=\pm\I$, the existence of wave operators  will be given at the
end of this paper as appendix .

We call $(\np{u}_n(t),\np{N}_n(t)):=(\np{u}(t-t_n),\np{N}(t-t_n))$
the {\it nonlinear concentrating wave} associated with
$(\lp{u}_n(t),\lp{N}_n(t))$. By the above construction we have \EQ{
 \pt\|(\lp{u}_n,\lp{N}_n)(0)-(\np{u}_n,\np{N}_n)(0)\|_{H^1\times L^2}
 \pr=\|(\np{u},\np{N})(-t_n)-U(-t_n)(\lp{f},\lp{g})\|_{H^1\times L^2}\to 0.}
Given a sequence of solutions to the Zakharov system with bounded
initial data, we can apply the free profile decomposition Lemma
\ref{free prof} to the sequence of initial data, and associate a
nonlinear profile with each free concentrating wave. If all
nonlinear profiles are scattering and the remainder is small enough,
then we can conclude that the original sequence of nonlinear
solutions is also scattering with a global Strichartz bound. More
precisely, we have
\begin{lem}\label{NL profile}
For each free concentrating wave $(\lp u_n^j,\lp N_n^j)$ in Lemma
\ref{free prof}, let $(\np u_n^j,\np N_n^j)$ be the associated
nonlinear concentrating wave. Let $(\tilde u_n, \tilde N_n)$ be the
sequence of nonlinear solutions with $(\tilde u_n, \tilde
N_n)(0)=(f_n,g_n)$. If $\|(\np
u_n^j,\np N_n^j)\|_{S(0,\I)}<\I$ for all $j< \bar J$, then
\EQ{\limsup_{n\to\I}\|(u_n,N_n)\|_{S(0,\I)}<\I.}
\end{lem}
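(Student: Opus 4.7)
The plan is to build an approximate solution from the nonlinear profiles plus the linear remainder and to close the argument by a long-time perturbation lemma adapted to the normal-form reformulation of \cite{GN}. Set
$$U_n^J := \sum_{j=1}^J \np u_n^j + \pe u_n, \qquad \mathcal N_n^J := \sum_{j=1}^J \np N_n^j + \pe N_n.$$
Each $(\np u_n^j,\np N_n^j)$ is merely a time translate of a fixed global scattering solution, so $\|(\np u_n^j,\np N_n^j)\|_{S(\R)}=:C_j<\I$ is independent of $n$. The Pythagorean relation in \eqref{linear orth} together with the boundedness of $(f_n,g_n)$ in $H^1\times L^2$ gives $\sum_j \|(\lp f^j,\lp g^j)\|_{H^1\times L^2}^2<\I$, so for $j$ exceeding some $j_0$ the profile lies in the small-data regime of \cite{GN}, where $C_j\lesssim \|(\lp f^j,\lp g^j)\|_{H^1\times L^2}$. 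Hence $\sum_j C_j^2<\I$, and once the cross-profile terms are shown to decouple one gets a uniform bound $\sup_{J,n}\|U_n^J\|_{S(0,\I)}<\I$.

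Next I would verify that $(U_n^J,\mathcal N_n^J)$ satisfies the Zakharov system up to an error that tends to $0$ in a suitable dual-$S$ norm as first $n\to\I$ with $J$ fixed and then $J\to \bar J$. Two decoupling mechanisms enter. First, the time separation \eqref{separation} combined with the dispersive decay of the free Schr\"odinger and wave propagators forces every cross-interaction between distinct profiles -- such as $(\re\np N_n^j)\np u_n^k$, the quadratic cross-terms arising from squaring $\sum_j \np u_n^j$, and the bilinear corrections produced by the normal-form substitution of $N$ by $N-|u|^2$ -- to vanish in the relevant space-time norms. Second, the interactions involving the remainder are handled by interpolating the smallness \eqref{smallness} against the non-sharp Strichartz norms inherited from the $H^1\times L^2$-boundedness of $(\pe u_n,\pe N_n)$, yielding smallness in every interior Strichartz exponent, which suffices since the estimates of \cite{GN} avoid the endpoints.

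A long-time perturbation lemma for the Zakharov system in the normal-form variables of \cite{GN}, applied on a finite subdivision of $(0,\I)$ on which $U_n^J$ has controlled $S$-norm on each piece, then yields $\|(u_n,N_n)-(U_n^J,\mathcal N_n^J)\|_{S(0,\I)}=o(1)$ as $n\to\I$. Combined with the uniform bound on $\|U_n^J\|_S$, this gives the claimed $\limsup_{n\to\I}\|(u_n,N_n)\|_{S(0,\I)}\lesssim \bigl(\sum_j C_j^2\bigr)^{1/2}+1<\I$.

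The main obstacle is the perturbation step itself: the normal form of \cite{GN} introduces nonlinear substitution terms that are not of Duhamel type, so the classical Kenig--Merle stability statement cannot be quoted verbatim and must be re-derived. One must either work in variables in which the equation is diagonal modulo a quadratic correction involving $|u|^2$, or explicitly absorb the non-Duhamel pieces into the ansatz $(U_n^J,\mathcal N_n^J)$. Together with the need to juggle several different product norms for the Schr\"odinger cubic, the wave--Schr\"odinger quadratic, and the bilinear normal-form remainders -- each demanding its own estimate from \cite{GN} -- this is where the technical effort beyond the classical NLS argument of \cite{HR} concentrates; the decoupling of free profiles and the Pythagorean summation are by now standard.
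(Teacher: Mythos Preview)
Your outline matches the paper's approach: the approximate solution $\vec u_n^J=\sum_j\vec{\np u}_n^j+\pe{\vec u}_n$, the uniform $S$-bound via Pythagorean summation plus small-data theory for $j>j_0$, the vanishing of the approximation error (cross-profile terms by time separation, remainder terms by interpolating \eqref{smallness} against non-sharp Strichartz norms), and closure by a long-time stability lemma are exactly the paper's Lemmas \ref{Stability} and \ref{per}.

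One point you leave underspecified deserves mention. Neither of your two suggested fixes for the non-Duhamel boundary terms $B(\vec u)$ is what the paper actually does. Instead, the paper makes the frequency threshold $\be$ in the normal-form splitting a free parameter depending on the solution size: the bare bilinear terms then satisfy $\|B(\vec u_1,\vec u_2)\|_S\lec 2^{-\te\be}\|\vec u_1\|_S\|\vec u_2\|_S$ (Lemma \ref{B-Q-T}), so choosing $\be=\be(A)$ with $2^{-\te\be}A\ll 1$ renders $B$ perturbative even for large data. The price is a $\be$-dependent constant $C(\be)$ in the Duhamel quadratic $Q$, which is absorbed by the standard interval subdivision in the auxiliary norm $\widetilde S$. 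This $\be$-dependent normal form is the specific device that makes the stability lemma close, and it is the only ingredient your proposal does not quite pin down.
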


To prove Lemma \ref{NL profile}, we need some global stability. In
the next subsection, we will refine the normal form reduction and
the nonlinear estimates that was used in \cite{GN}, and then prove
Lemma \ref{NL profile} and Lemma \ref{lem:crit}.

%%%%%%%%%%%%%%%%%%%%%%%%%%%%%%%%%%%%%%%%%%%%%%%%%%%%%%%%%%%%%%%%%%%%%%%%%%%%%%%%%%%%%%%%%%%%%%%%%%%%%%%%% stability

\subsection{Nonlinear estimates with small non-sharp norms}
In order to obtain the nonlinear profile decomposition, we need that
the non-sharp smallness \eqref{smallness} is sufficient to reduce
the nonlinear interactions globally. The idea is to use
interpolation, thus we need to do some refined estimates than in
\cite{GN}, more precisely, to avoid using the sharp (or endpoint) norms with $L^2_t$ or $L^\I_t$. 

\subsubsection{Modifying the nonresonant part} 
The first problem in following the Strichartz analysis in \cite{GN} is the $L^2_t$-type norms. In fact, one can observe that the use of $L^2_t$-type Strichartz norm for $N$
is inevitable for the low-high interactions of $nu$ in very low frequencies, since the regularity exponent
becomes bigger than that for the dual Schr\"odinger admissible exponent as we move the Strichartz norm of $N$ to $L^{2+}_t$.

However, this problem can be avoided by applying the normal form to those interactions.
In fact, there is no resonance in very low frequencies because
\EQ{
 -|\x|^2\pm \al|\x-\y|+|\y|^2 \sim \al|\x-\y|}
when all of $|\x|,|\x-\y|,|\y|$ are small.
Hence we include them into the ``non-resonant" interactions, which are integrated in time before the Strichartz estimate. 

The second problem is that our solution is no longer small, so the nonlinear terms without time integration (i.e. the boundary terms from the partial integration) do not contain any small factor for the perturbation argument. 
To overcome this difficulty, we shrink the ``non-resonant" part to either higher or lower frequencies, for which we gain a small factor, depending on the frequencies, from the regularity room. 
Hence our decomposition into the ``resonant" and ``non-resonant" interactions depends on the solution size. 

Thus we are lead to divide the bilinear interactions $nu$ and $|u|^2$ as follows. Let $u=\sum_{k\in\Z} P_ku$ be the standard homogeneous Littlewood-Paley decomposition such that $\supp\F P_k u\subset\{2^{k-1}<|\x|<2^{k+1}\}$. For a parameter $\be\ge 5+|\log_2\al|$, let 
\EQ{
 \pt XL:=\{(j,k)\in\Z^2 \mid j\ge \max(k+5,\be) \},
 \pr RL:=\{(j,k)\in\Z^2 \mid |j|<\be \tand j \ge k+5\}, 
 \pr LL:=\{(j,k)\in\Z^2 \mid \max(j,k)\le -\be\}, 
 \pr LH:=\{(j,k)\in\Z^2 \mid k>\min(j-5,-\be)\},
 \pr HH:=\{(j,k)\in\Z^2 \mid |j-k|<5 \tand \max(j,k)\ge\be\},
 \pr RR:=\{(j,k)\in\Z^2 \mid \max(j,k)<\be\},}
and $LX:=\{(k,j)\mid (j,k)\in XL\}$. Then 
\EQ{
 \Z^2=(XL \cup LL) \cup (RL \cup LH) = (XL\cup LX)\cup(HH\cup RR),}
where all the unions are disjoint. For any set $A\subset\Z^2$, and any functions $f(x),g(x)$, we denote the bilinear frequency cut-off to $A$ by 
\EQ{
 (fg)_A=\F^{-1}\int\cP_A\hat f(\x-\y)\hat g(\y)d\y:=\sum_{(j,k)\in A}(P_jf)(P_kg).}

For the nonlinear term $nu$, we apply the time integration by parts on $XL\cup LL$, where the phase factor $\om=-|\x|^2\pm\al|\x-\y|+|\y|^2$ is estimated
\EQ{
 |\om| \sim |\x-\y|\LR{\x-\y} \sim |\x-\y|\LR{\x},}
which is gained in the bilinear operator
\EQ{
 \pt\Om_\pm(f,g):=\F^{-1}\int \cP_{XL\cup LL}\frac{\hat f(\x-\y)\hat g(\y)}{-|\x|^2\pm\al|\x-\y|+|\y|^2}d\y, 
 \pr \Om(f,g):=\frac 12\{\Om_+(f,g)+\Om_-(\ba f,g)\}}
For the nonlinear term $u\bar{u}$, we integrate by parts on $XL\cup LX$. Then we get a bilinear operator of the form 
\begin{align}
\tilde\Om(f,g)&:=\F^{-1}\int \cP_{XL\cup LX}\frac{\hat f(\x-\y)\hat{\bar
g}(\y)}{|\xi-\eta|^2-|\eta|^2-\alpha|\xi|}d\y.
\end{align}

After this modification of the normal form, we can rewrite the
integral equation for \eqref{eq:Zak2} as follows. Let \EQ{
 \vec u:=(u,N), 
 \pq \vec u^0:=U(t)\vec u(0)=(e^{-it\De}u(0),e^{it\al D}N(0)).}
For the fixed free solution $\vec u^0$, the iteration $\vec
u'\mapsto \vec u$ is given by \EQ{
 \vec u=\vec u^0-U(t)B(\vec u(0),\vec u(0))+B(\vec u',\vec u')+Q(\vec u',\vec u')+T(\vec u',\vec u',\vec u'),}
where the bilinear forms $B,Q$ and the trilinear form $T$ are
defined by
\begin{align*}
B(\vec u_1,\vec u_2):=&(\Om(N_1,u_2),D\ti\Om(u_1,\ba u_2)),\\
Q(\vec u_1,\vec u_2):=&\int_0^t
U(t-s)((n_1u_2)_{LH\cup \al L},D(u_1\ba u_2)_{HH\cup\al L\cup L\al})(s)ds,\\
T(\vec u_1,\vec u_2,\vec
u_3):=&\int_0^tU(t-s)(\Om(D(u_1\ba u_2),u_3)+\Om(N_1,n_2u_3),D\ti
\Om(u_1,n_2u_3))(s)ds.
\end{align*}
For brevity, we denote
\begin{align*}
NL(\vec u_1,\vec u_2,\vec u_3):=&B(\vec u_1,\vec u_2)+Q(\vec u_1,\vec
u_2)+T(\vec u_1,\vec u_2,\vec u_3), \ \  NL(\vec u):=NL(\vec u,\vec
u), \\
B(\vec u):=&B(\vec u,\vec u),\ \ Q(\vec u):=Q(\vec u,\vec u),\ \
T(\vec u):=T(\vec u,\vec u,\vec u).
\end{align*}

We can estimate each term in the Duhamel formula using some powers
of Strichartz norms with non-sharp exponents. For brevity of
H\"older-type estimates, we denote the space-time norms by \EQ{
 \pt (b,d,s):=L^{1/b}_t \dot B^s_{1/d,2},
 \pr (b,d\pm \e,s)_+:=(b,d+\e,s)+(b,d-\e,s),
 \pr (b,d\pm \e,s)_\cap:=(b,d+\e,s)\cap(b,d-\e,s).}
Using the above notation, we introduce nearly full sets of the radial Strichartz norms for the Schr\"odinger and the wave equations. Fix small numbers
\EQ{
 0<\ka \ll \e \ll 1,}
and let
\EQ{ \label{def S}
 \pt SS:=\LR{D}^{-1}[(0,\fr 12,0)\cap(\fr 12,\fr 3{10}-\fr\ka3,\fr 25-\ka)],
 \pr SW:=(0,\fr 12,0)\cap(\fr 12,\fr 14-\fr\ka3,-\fr 14-\ka),
 \pq S:=SS\times SW.}
Also we denote the smallness in \eqref{smallness} by using
\EQ{
 \|u\|_X:=\|u\|_{L^\I_t(B^{-\fr 12-\de}_\I)}, \pq \|n\|_Y:=\|n\|_{L^\I_t(\dot{B}^{-\fr 32-\de}_\I+\dot{B}^{-\fr 32+\de}_\I)}, \pq Z:=X\times Y.}

In the nonlinear terms, we should choose appropriate Strichartz exponents so that all can be controled by interpolation between $S$ and $Z$.
For that purpose, we will choose $(b,d,s)$ for $u$ and $N$ respectively to be $H^s$ admissible with $0<s<1$ and $L^2$ admissible for radial functions.
Moreover, $b<1/2$ and $(b,d)\not=(0,1/2)$.
Besides that, we will use the sum space\footnote{This is because $N(0)\in L^2$ while $u(0)\in H^1=L^2\cap\dot H^1$.} with small $\e>0$ for $N$ and the intersection for $u$, so that we can dispose of very low or high frequencies, and sum over the dyadic decomposition without any difficulty.

\subsubsection{Bare bilinear terms}
First consider the bilinear terms which do not contain the time integration, namely the boundary term in the transform.
In the equation for $u$, $\Om(n,u)$ is roughly like $\LR{D}^{-1}(D^{-1}n)u$ for each dyadic piece.
\begin{lem}
(a) There exists $\theta>0$ such that for any $N$ and $u$, we have
\begin{align}
 \|\Om(n,u)\|_{L^\I H^1} \lec& 2^{-\te \be}\|u\|_{SS}^{1-\te}\|n\|_{SW}^{1-\te}\|n\|_Y^\te
 \|u\|_X^\te,\\
 \|\Om(n,u)\|_{SS} \lec& 2^{-\te \be}\|u\|_{SS}^{1-\te}\|n\|_{SW}^{1-\te}\|n\|_Y^\te \|u\|_X^\te.
\end{align}

(b) There exists $\theta>0$ such that for any $u$ and $u'$, we have
\begin{align}
 \|D\ti\Om(u,u')\|_{L^\I L^2} \lec& 2^{-\te \be}\|u\|_{SS}^{1-\te}\|u'\|_{SS}^{1-\te}\|u\|_X^{\te}\|u'\|_X^\te,\\
 \|D\ti\Om(u,u')\|_{SW} \lec& 2^{-\te \be}\|u\|_{SS}^{1-\te}\|u'\|_{SS}^{1-\te}\|u\|_X^{\te}\|u'\|_X^\te.
\end{align}
\end{lem}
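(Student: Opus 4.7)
\medskip\noindent\textbf{Proof proposal.}
The key observation is a phase estimate on the frequency sets defining $\Om$ and $\ti\Om$. On $XL\cup LL$, the resonance phase $\om=-|\x|^2\pm\al|\x-\y|+|\y|^2$ satisfies $|\om|\gec|\x-\y|\LR{\x}$ uniformly, with the implicit constant absorbing $\al$ by virtue of $\be\ge 5+|\log_2\al|$: on $XL$, $|\x-\y|\sim|\x|\sim 2^j\ges 2^\be\gg\al$ so the quadratic contributions dominate and $|\om|\sim 2^{2j}$; on $LL$, both quadratic terms are much smaller than $\al|\x-\y|$, hence $|\om|\sim 2^j$ while $\LR{\x}\sim 1$. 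Analogously, the phase of $\ti\Om$ has magnitude $\sim|\x-\y|^2$ on $XL$ and $\sim|\y|^2$ on $LX$. Consequently, for each dyadic piece, $\LR{D}\Om(P_jn,P_ku)$ gains a frequency factor $2^{-j}$ relative to the raw bilinear product, and $D\ti\Om(P_ju,P_ku')$ gains $2^{-\max(j,k)}$.

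With this in hand, both (a) and (b) can be treated by a single scheme. Decompose $\Om(n,u)=\sum_{(j,k)\in XL\cup LL}\Om(P_jn,P_ku)$, and similarly for $\ti\Om$. For each dyadic block apply H\"older in space, $\|(P_jn)(P_ku)\|_{L^2_x}\le\|P_jn\|_{L^{p_j}_x}\|P_ku\|_{L^{q_k}_x}$, with the exponents chosen so the two factors fall in Strichartz-admissible Besov norms obtained by interpolating $SW$ (for $n$) or $SS$ (for $u$) against the bare low-regularity norms $Y$ or $X$. The target norms on the left---$L^\I H^1$ or $SS$ in (a), and $L^\I L^2$ or $SW$ in (b) after applying $D$---are recovered by combining the phase gain with Plancherel and the radial Strichartz estimates of \cite{GN}.

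The smallness $2^{-\te\be}$ is produced by the frequency restriction. On $XL$, where the wave frequency satisfies $j\ge\be$, interpolating $\|n\|_{SW}^{1-\te}\|n\|_{\dot B^{-3/2-\de}_\I}^\te$ yields an effective negative regularity, and after Bernstein a factor $2^{-\te'j}$ with $\te'>0$; combined with the $2^{-j}$ from $\om^{-1}$, the sum over $j\ge\be$ is controlled by $2^{-\te''\be}$. On $LL$, where $\max(j,k)\le-\be$, one interpolates instead toward the low-frequency branch $\dot B^{-3/2+\de}_\I$ of $Y$ (and the low-frequency part of $SS$), gaining a positive power of $2^{\max(j,k)}$ via Bernstein; the dyadic sum over $j,k\le-\be$ again yields $2^{-\te''\be}$. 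Part (b) is treated symmetrically, interpolating both $u$-factors between $SS$ and $X$ on $XL\cup LX$.

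The main obstacle is the interpolation bookkeeping: the H\"older pair $(p_j,q_k)$ and the parameter $\te$ must be chosen so that each dyadic factor lies in a genuine radial Strichartz--Besov norm for its own equation, while simultaneously extracting a positive power of $2^{-\be}$ in every frequency regime ($XL$ and $LL$ for part (a); $XL$ and $LX$ for part (b)). This is feasible precisely because the exponents $(b,d,s)$ inside $SS$ and $SW$ sit strictly inside the admissible range---$b<1/2$ and $(b,d)\ne(0,1/2)$---leaving a margin to interpolate toward $X,Y$ without leaving the admissible cone. Once the exponents are fixed, the remaining dyadic summations are routine.
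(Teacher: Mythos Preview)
Your proposal is correct and follows essentially the same strategy as the paper: dyadic decomposition, the phase bound $|\om|\sim|\x-\y|\LR{\x}$ on $XL\cup LL$ (and the analogous one for $\ti\Om$), a Coifman--Meyer-type reduction to H\"older on dyadic blocks, and interpolation between the Strichartz norms $SS,SW$ and the weak norms $X,Y$. The one procedural difference is where the factor $2^{-\te\be}$ comes from: the paper first fixes explicit non-sharp H\"older exponents and extracts a concrete gain (e.g.\ $2^{-\be/10}$) from the Sobolev gap between those exponents and the endpoint $L^\I L^2$ for $n$, and only afterwards interpolates toward $X,Y$ to produce the $\|\cdot\|_S^{1-\te}\|\cdot\|_Z^\te$ structure; you instead extract the smallness directly from the $Z$-interpolation via Bernstein. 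Both routes work and rest on the same underlying slack in the admissible exponents, so the difference is cosmetic. One small point to tighten in your write-up: the passage from ``$\om^{-1}$ has size $2^{-j}\LR{2^j}^{-1}$ on the block'' to a genuine product estimate requires a bilinear multiplier lemma (the paper invokes \cite[Lemma~3.5]{GN}); your phrase ``apply H\"older in space'' elides this step.
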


\begin{proof}
(a) By the Coifman-Meyer-type bilinear estimate on dyadic pieces (see \cite[Lemma 3.5]{GN}), we have for $(j,k)\in XL$, 
\begin{align*}
\|\Om(n_j,u_k)\|_{L^\I H^1} &\lesssim \|D^{-1}n_j\|_{(0,\fr 15\pm\e,0)_+}\|u_k\|_{(0,\fr 3{10}\pm\e,0)_\cap}\\
 &\lesssim 2^{-\be/10}\|D^{-1}n_j\|_{(0,\fr 12\pm\e,1)_+}\|u_k\|_{(0,\fr 3{10}\pm\e,0)_\cap},
\end{align*}
and for $(j,k)\in LL$, 
\begin{align*}
\|\Om(n_j,u_k)\|_{L^\I H^1} &\lesssim \|D^{-1}n_j\|_{(0,\fr 2{15}\pm\e,0)_+}\|u_k\|_{(0,\fr {11}{30}\pm\e,0)_\cap}\\
 &\lesssim 2^{-\be/10}\|D^{-1}n_j\|_{(0,\fr 12\pm\e,1)_+}\|u_k\|_{(0,\fr {11}{30}\pm\e,0)_\cap}. 
\end{align*}
Since the right hand side is bounded by $\|n\|_{L^\I L^2}\|u\|_{L^\I H^1}$ via non-sharp Sobolev embedding, we obtain, after summation over dyadic decomposition,
\EQ{
 \|\Om(n,u)\|_{L^\I H^1} \lec 2^{-\be/10}\|u\|_{SS}^{1-\te}\|n\|_{SW}^{1-\te}\|n\|_Y^\te \|u\|_X^\te,}
for some small $\te>0$. Similarly we have, for $(j,k)\in XL$, 
\EQ{
 \pt\|\Om(n_j,u_k)\|_{\LR{D}^{-1}(\fr 12,\fr 3{10}-\fr\ka3,\fr 25-\ka)}
 \pn\lec \|D^{-1}n_j\|_{(\fr 14,\fr {7}{30}-\fr \ka 3\pm\e,\fr 25-\ka)_+}\|u_k\|_{(\fr 14,\fr 1{15}\pm\e,0)_\cap}
 \pr\pq\lec 2^{-\be/20}\|D^{-1}n_j\|_{(\fr 14,\fr {7}{30}-\fr \ka 3\pm\e,\fr{9}{20}-\ka)_+}\|u_k\|_{(\fr 14,\fr 1{15}\pm\e,0)_\cap},}
and for $(j,k)\in LL$,
\EQ{
 \pt\|\Om(n_j,u_k)\|_{\LR{D}^{-1}(\fr 12,\fr 3{10}-\fr\ka3,0)}
 \pn\lec \|D^{-1}n_j\|_{(\fr 14,\fr 1{15}-\fr \ka 3\pm\e,0)_+}\|u_k\|_{(\fr 14,\fr {7}{30}\pm\e,0)_\cap}
 \pr\pq\lec 2^{-\be/20}\|D^{-1}n_j\|_{(\fr 14,\fr 1{15}-\fr \ka 3\pm\e,-\frac{1}{20})_+}\|u_k\|_{(\fr 14,\fr {7}{30}\pm\e,0)_\cap} }
Hence in either case we can control by non-sharp norms, so
\EQ{
 \|\Om(n,u)\|_{SS} \lec 2^{-\be/20}\|u\|_{SS}^{1-\te}\|n\|_{SW}^{1-\te}\|n\|_Y^\te \|u\|_X^\te.}

(b) We may assume $(j,k)\in XL$, since the other case $LX$ is treated in the same way. Similarly to the above, we have 
$D\ti\Om(f_j,g_k)\sim\LR{D}^{-1}(f_jg_k)$, so 
\EQ{
 \pn\|D\ti\Om(u_j,u'_k)\|_{L^\I L^2} \pt\lec \|\LR{D}D\ti\Om(u_j,u'_k)\|_{L^\I(L^2+L^{6/5})}
 \pr\lec \|u_j\|_{(0,\fr 13\pm\e,0)_+}\|u'_k\|_{(0,\fr 13\pm\e,0)_+}
 \pr \lec 2^{-\be/10} \|u_j\|_{(0,\fr 13\pm\e,\frac{1}{10})_+}\|u'_k\|_{(0,\fr 13\pm\e,0)_+} ,}
hence
\EQ{
 \|D\ti\Om(u,u')\|_{L^\I L^2} \lec 2^{-\be/10}\|u\|_{SS}^{1-\te}\|u'\|_{SS}^{1-\te}\|u\|_X^{\te}\|u'\|_X^\te.}
Similarly,
\EQ{
 \pn\|D\ti\Om(u_j,u'_k)\|_{(\fr 12,\fr 14-\fr\ka3,-\fr 14-\ka)}
 \pt\lec\|\LR{D}D\ti\Om(u_j,u'_k)\|_{(\fr 12,\fr 23,0)}
 \pr\lec 2^{-\be/10}\|u_j\|_{(\fr 14,\fr 13,\frac1{10})}\|u'_k\|_{(\fr 14,\fr 13,0)},}
and so \EQ{ \|D\ti\Om(u,u')\|_{SW} \lec
 2^{-\be/10}\|u\|_{SS}^{1-\te}\|u'\|_{SS}^{1-\te}\|u\|_X^{\te}\|u'\|_X^\te.}
Thus the proof is completed.
\end{proof}

\subsubsection{Duhamel bilinear terms}
Next we consider the remaining bilinear terms in the Duhamel form after the normal form transform. 
Here we have to use the radial improvement of the Strichartz norms. 
For brevity, we denote the integrals in the Duhamel formula by 
\EQ{
 I_u f:=\int_0^t e^{-i(t-s)\De}f(s)ds, \pq I_Nf:=\int_0^t e^{i(t-s)\al D}f(s)ds.}

\begin{lem}
(a) There exists $\theta>0$ and $C(\be)>1$ such that for any $N$ and $u$, we have
\begin{align*}
\|I_u(nu)_{LH}\|_{SS}
 \le &C(\be) \|u\|_{SS}^{1-\te}\|n\|_{SW}^{1-\te}\|n\|_Y^{\te}
 \|u\|_X^{\te},\\
\|I_u(nu)_{RL}\|_{SS}
 \le& C(\be) \|u\|_{SS}^{1-\te}\|n\|_{SW}^{1-\te}\|n\|_Y^{\te} \|u\|_X^{\te}.
\end{align*}

(b) There exists $\theta>0$ and $C(\be)>1$ such that for any $u$ and $u'$, we have
\begin{align*}
\|I_ND(uu')_{HH}\|_{SW} \le& C(\be)
\|u\|_{SS}^{1-\te}\|u'\|_{SS}^{1-\te}\|u\|_X^{\te}\|u'\|_X^{\te},\\
\|I_ND(uu')_{RR}\|_{SW} \le& C(\be)
\|u\|_{SS}^{1-\te}\|u'\|_{SS}^{1-\te}\|u\|_X^{\te}\|u'\|_X^{\te}.
\end{align*}
\end{lem}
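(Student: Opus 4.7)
The plan is to follow the same dyadic, radial-improved Coifman--Meyer template as the preceding lemma, but with one essential modification: since these terms sit inside the Duhamel operators $I_u$ or $I_N$ rather than appearing as boundary terms, I would first dualize the relevant Strichartz norm and then reduce to a bilinear $L^1_t$-type product estimate on each dyadic piece. No phase factor $\om^{-1}$ is available for a normal-form gain here, so we aim only for the weaker $C(\be)$ bound rather than the $2^{-\te\be}$ smallness of the bare bilinear estimates.

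For part (a), I would dualize $I_u$ against the $SS$ norm, reducing $\|I_u(nu)_A\|_{SS}$ to a dual-Strichartz norm of $(nu)_A$ for $A\in\{LH,RL\}$. On each dyadic piece $(P_j n)(P_k u)$ the Coifman--Meyer bilinear estimate of \cite[Lemma~3.5]{GN} splits the product into a H\"older pair of Schr\"odinger and wave Strichartz norms. In the $LH$ region $k>\min(j-5,-\be)$, so $u$ carries the output frequency; I would pick non-sharp Schr\"odinger-admissible exponents with $b<1/2$ on both factors, leaving enough room in integrability and regularity that interpolation recovers $\|u\|_{SS}^{1-\te}\|u\|_X^{\te}$ and $\|n\|_{SW}^{1-\te}\|n\|_Y^{\te}$ for some $\te>0$. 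In the $RL$ region, $|j|<\be$ restricts $j$ to $O(\be)$ values, so the $j$-sum contributes only $C(\be)$, while the remaining $k$-sum converges geometrically.

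For part (b), I would dualize $I_N$ against $SW$ via wave Strichartz duality and treat $D(P_j u)(P_k u')$ on $HH\cup RR$. On $HH$ the two factors live in matched high frequencies $2^j\sim 2^k\ge 2^{\be}$, and the derivative $D\sim 2^{\max(j,k)}$ is absorbed against the negative regularity $-1/4-\ka$ of $SW$; H\"older in radial-improved Schr\"odinger Strichartz spaces, followed by interpolation with $X$, then yields the claim. On $RR$, both indices satisfy $\max(j,k)<\be$, so the output has bounded frequency and the dyadic sum has only $O(\be^2)$ terms, yielding $C(\be)$; each individual piece is bounded by two $SS$ factors after Coifman--Meyer.

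The main obstacle will be ensuring convergence of the dyadic summation in the unbounded regions $LH$ and $HH$. This is what the small non-sharp exponent $\ka>0$ built into the definitions of $SS$ and $SW$ is designed for: it yields a geometric decay of the form $2^{-c\ka(j-k)_+}$ or $2^{-c\ka\max(j,k)}$ on each dyadic piece, just enough to sum. A related subtlety is that the exponent choices required (close to the $L^2_t$-endpoint for $n$ and close to the Schr\"odinger endpoint for $u$) are only admissible after the radial improvement of Strichartz; for non-radial data the argument would fail at precisely this step, which is why the radial assumption enters here as well as in Lemma~\ref{free prof}.
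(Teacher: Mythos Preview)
Your overall strategy matches the paper's: reduce $I_u$ and $I_N$ to dual-Strichartz control of the nonlinearity, H\"older on dyadic pieces at non-sharp exponents (avoiding both $L^2_t$ and $L^\I_t$), sum, then interpolate with $Z=X\times Y$ to extract the $\te$-power. The paper carries this out with explicit exponent choices, using the radial-improved Strichartz only for $RL$ (Schr\"odinger side) and for $HH$, $RR$ (wave side), while $LH$ goes through with the ordinary Strichartz.

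There is, however, a concrete error in your handling of $RR$: the region $RR=\{\max(j,k)<\be\}$ is unbounded \emph{below}, so the dyadic sum is infinite, not $O(\be^2)$; the $C(\be)$ factor does not come from counting terms. What the paper actually uses is that in $RR$ the output frequency is bounded \emph{above} by $2^\be$ (and, analogously, in $LH$ the $u$-frequency $k$ is bounded \emph{below} by $-\be$, which is the precise content of ``$u$ carries the output frequency''). This frequency bound lets one trade regularity on the relevant factor at the cost of a power of $2^\be$---for instance replacing $\fr34+\e$ by $\fr12$ on $u_j$ in the $RR$ estimate, or $s+\fr14+3\e$ by $\fr54+3\e$ on $u_k$ in the $LH$ estimate---after which the $\pm\e$ slack built into the $(b,d\pm\e,s)_\cap$ and $(b,d\pm\e,s)_+$ norms makes the remaining low-frequency sum geometrically convergent. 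Once you replace the ``$O(\be^2)$ terms'' claim by this regularity-shift argument, your proof goes through and coincides with the paper's.
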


\begin{proof}
In this proof we ignore the dependence of the constants on $\be$. 

(a) For $(j,k)\in LH$, we have for $0\le s\le 1$, 
\EQ{
 \|n_ju_k\|_{(1-2\e,\fr 12+2\e,s+2\e)}
 \pt\lec \|n_j\|_{(\fr 12-\e,\fr 14\pm\fr {\e} 3,-\fr 14-\e)_\pm}
  \|u_k\|_{(\fr 12-\e,\fr 14+2\e\pm\fr {\e} 3,s+\fr 14+3\e)_\cap}
 \pr\lec \|n_j\|_{(\fr 12-\e,\fr 14\pm\fr {\e} 3,-\fr 14-\e)_\pm}
  \|u_k\|_{(\fr 12-\e,\fr 14+2\e\pm\fr {\e} 3,\fr 54+3\e)_\cap},}
where in the second inequality we used that $k$ is bounded from below. 
Since the left hand side is $\dot H^s$-admissible norm for the Strichartz estimate (without the radial symmetry), we obtain the full Strichartz bound in $H^1$. 

For $(j,k)\in RL$, we may neglect the regularity of $n_j$ and the product, since their frequencies are bounded from above and below. Using the radial improved Strichartz, the full $H^1$ Strichartz norm is bounded by 
\EQ{
 \|n_ju_k\|_{(\fr 12+2\e,\fr 34-3\e,0)}
 \lec \|n_j\|_{(\fr 12-\e,\fr 14,0)}
  \|u_k\|_{(3\e,\fr 12-3\e,0)}.}

Summing these estimates over dyadic pieces in the specified regions, and using non-sharp Sobolev embedding and interpolation, we obtain
\EQ{
 \pt\|I_u(nu)_{LH}\|_{SS}
 \lec \|u\|_{SS}^{1-\te}\|n\|_{SW}^{1-\te}\|n\|_Y^{\te} \|u\|_X^{\te},
 \pr\|I_u(nu)_{R L}\|_{SS}
 \lec \|u\|_{SS}^{1-\te}\|n\|_{SW}^{1-\te}\|n\|_Y^{\te} \|u\|_X^{\te}.}

(b) We consider only the case $j\ge k$ for $u_j u_k'$, since the other case is treated in the same way. For $(j,k)\in HH$, 
\EQ{
 \|u_j u'_k\|_{(1-\e,\fr 12+\fr 23\e,1+\e)}
 \lec \|u_j\|_{(\fr 12-\fr \e 2,\fr 14+\fr \e 3,\fr 12+\fr \e2)} \|u'_k\|_{(\fr 12-\fr \e 2,\fr 14+\fr \e 3,\fr 12+\fr \e2)},}
and in the case $(j,k)\in RR$, since $j$ is bounded from above, 
\EQ{
  \|u_j u'_k\|_{(\fr 12+\e,\fr 34,\fr 34+\e)}
 \pt\lec \|u_j\|_{(\fr 12-\e,\fr 14+2\e,\fr 34+\e)} \|u'_k\|_{(2\e,\fr 12-2\e,0)}
 \pr\lec \|u_j\|_{(\fr 12-\e,\fr 14+2\e,\fr 12)} \|u'_k\|_{(2\e,\fr 12-2\e,0)}.}
Hence
\EQ{
 \pt \|D(uu')_{HH}\|_{(1-\e,\fr 12+\fr 23\e,\e)} \lec \|u\|_{SS}^{1-\te}\|u'\|_{SS}^{1-\te}\|u\|_X^{\te}\|u'\|_X^{\te},
 \pr \|D(uu')_{\al L+L\al}\|_{(\fr 12+\e,\fr 34,-\fr 14+\e)} \lec \|u\|_{SS}^{1-\te}\|u'\|_{SS}^{1-\te}\|u\|_X^{\te}\|u'\|_X^{\te}.}
The left hand sides are $L^2$-admissible norms for radial functions. Thus the proof is completed by the radial improved Strichartz. 
\end{proof}

\subsubsection{Duhamel trilinear terms}
Finally we estimate the trilinear terms which appear after the
normal transform. These are supposedly the easiest, but there is a
small complication due to the fact that we have to use negative
Sobolev spaces for $N$ in some of the products: \EQ{
 \pt \|fg\|_{\dot B^{-s}_{r,2}} \lec \|f\|_{\dot B^{-s}_{p,2}}\|g\|_{\dot B^s_{q,2}}
 \pr 0\le s<3/q,\ 1/r=1/p+1/q-s/3.}
In the next lemma, the constant may decay as $\be\to\I$, but we do not need it. 
\begin{lem}
(a) There exists $\theta>0$ such that for any $u,v,w,n,n'$, we have
\begin{align*}
 \|I_u\Om(D(uv),w)\|_{SS} \lec&
 \|u\|_{SS}^{1-\te}\|v\|_{SS}^{1-\te}\|w\|_{SS}^{1-\te}\|u\|_X^{\te}\|v\|_X^{\te}\|w\|_X^{\te}.\\
 \|I_u\Om(n,n'u)\|_{SS}
 \lec& \|n\|_{SW}^{1-\te}\|n'\|_{SW}^{1-\te}\|u\|_{SS}^{1-\te}\|n\|_Y^{\te}\|n'\|_Y^{\te}\|u\|_X^{\te}.
\end{align*}

(b) There exists $\theta>0$ such that for any $n,u,u'$, we have
\begin{align*}
\|I_ND\ti\Om(nu,u')\|_{SW}+\|I_ND\ti\Om(u,nu')\|_{SW} \lec
\|n\|_{SW}^{1-\te}\|u\|_{SS}^{1-\te}\|u'\|_{SS}^{1-\te}\|n\|_Y^{\te}\|u\|_X^{\te}\|u'\|_X^{\te}.
\end{align*}
\end{lem}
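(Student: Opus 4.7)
The plan is to mirror the strategy used for the bare bilinear and Duhamel bilinear lemmas: decompose all inputs dyadically in frequency, exploit the gain of $\LR{D}^{-1}$ that $\Om$ (resp.\ $\ti\Om$) provides on its support $XL\cup LL$ (resp.\ $XL\cup LX$), apply H\"older--Sobolev on each dyadic piece, and then close via non-sharp Strichartz estimates (using the radial improvement where needed). The small gain $\te>0$ in the $Z$ norm will come, as before, from interpolating each $SS$ or $SW$ factor with the corresponding Besov $X$ or $Y$ norm through a non-sharp Sobolev embedding. Since the statement does not demand any $\be$-decay, constants of the form $2^{C\be}$ may be freely absorbed---in fact the lemma does not track $\be$ at all, so there is strictly more room than in the first two lemmas.

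For the first estimate in part (a), I will rewrite $\Om(D(uv),w)$ on each dyadic piece inside $XL\cup LL$ as $\LR{D}^{-1}$ acting on the product of $D(uv)$ and $w$, so that it plays exactly the role of $\Om(n,u)$ with $n$ replaced by $D(uv)$. I first bound $D(uv)$ in an $SW$-type norm by H\"older applied to two $SS$ factors---one factor in an $\dot H^s$-admissible Strichartz norm and the other in an $L^2$-admissible one---and then plug the result into the $\Om$ bilinear bound of the first lemma. For the second estimate in (a), $I_u\Om(n,n'u)$, I first bound $n'u$ in a space that plays the role of $SS$ for the second input of $\Om$; this uses the Besov product rule
\EQ{
 \|fg\|_{\dot B^{-s}_{r,2}}\lec \|f\|_{\dot B^{-s}_{p,2}}\|g\|_{\dot B^s_{q,2}},\pq 0\le s<3/q,\ 1/r=1/p+1/q-s/3,}
so that the negative regularity of $n'$ is compensated by the positive regularity of $u$, and then I feed it into the $\Om(n,\cdot)$ estimate from the first lemma. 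Part (b) is structurally identical but with the roles of $\Om/\ti\Om$ and $SS/SW$ interchanged: estimate $nu$ (or $nu'$) by the same Besov product rule, use the $\ti\Om$ gain of $\LR{D}^{-1}$ on $XL\cup LX$, then apply the wave Strichartz operator $I_N$ with the radial improvement to land in $SW$. Interpolating every input between $SS/SW$ and $X/Y$ produces the common small power $\te$.

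The main obstacle is the exponent bookkeeping: I must pick $(b,d,s)$ for each dyadic configuration so that (i) every input norm interpolates between $SS$ and $X$ (or $SW$ and $Y$) with the \emph{same} $\te>0$, (ii) the product norm after the $\Om$ or $\ti\Om$ bound is a non-sharp Schr\"odinger or wave Strichartz norm respecting the constraints $0<s<1$, $b<1/2$, $(b,d)\ne(0,1/2)$, and (iii) the negative Besov exponent for each $n$ factor lies strictly inside the admissible range $0\le s<3/q$ of the product rule above. Because $\Om$ and $\ti\Om$ each provide a full unit of regularity and $\be$ may be taken large, there is ample slack to satisfy all three constraints simultaneously, so no individual estimate is delicate; only the combinatorial accounting is, and it will proceed exactly as in the two preceding lemmas. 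The one genuinely new ingredient compared with those lemmas is the extra factor, which costs one more application of H\"older and, for products involving $n$, one application of the Besov product rule stated above.
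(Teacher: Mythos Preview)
Your overall strategy---dyadic decomposition, exploit the $\LR{D}^{-1}$ gain from $\Om$/$\ti\Om$, H\"older--Sobolev, then interpolate each factor between $S$ and $Z$---is correct and is exactly what the paper does. But your specific implementation for part (a) has a gap: you propose to bound $D(uv)$ (resp.\ $n'u$) in an $SW$-type (resp.\ $SS$-type) norm and then ``plug the result into the $\Om$ bilinear bound of the first lemma.'' That lemma, however, bounds the \emph{bare} bilinear form $\Om(n,u)$ in the solution spaces $SS$ and $L^\I_tH^1_x$; it does not involve the Duhamel operator $I_u$, and neither $SS$ nor $L^\I_tH^1_x$ is a dual Strichartz norm. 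So even if you succeed in placing $\Om(D(uv),w)$ in $SS$, you have no mechanism to pass from there to $I_u\Om(D(uv),w)\in SS$, since $I_u:L^\I_tH^1_x\to SS$ simply fails (the time integral grows). A secondary obstruction: putting $D(uv)$ in $SW$ would in particular require $uv\in L^\I_t\dot H^1_x$, which does not follow from $u,v\in SS\subset L^\I_tH^1_x$ in three dimensions.

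The paper avoids both issues by estimating the full trilinear expression directly in a \emph{dual} Strichartz norm and only then applying $I_u$ or $I_N$. For the first term, using $\Om(D(uv)_j,w_k)\sim\LR{D}^{-1}((uv)_jw_k)$ one writes simply
\[
 \|\Om(D(uv),w)\|_{L^1_tH^1_x}\lec\|u\|_{L^3_tL^6_x}\|v\|_{L^3_tL^6_x}\|w\|_{L^3_tL^6_x},
\]
so that $I_u:L^1_tH^1_x\to SS$ by the standard (non-sharp) Strichartz estimate; the $\te$-interpolation with $X$ is done afterward via non-sharp Sobolev embedding. The same pattern handles $\Om(n,n'u)$ (landing in $L^1_tH^1_x$, with the Besov product rule used exactly as you describe) and $D\ti\Om(nu,u')$ (landing in $L^1_tL^2_x$ before $I_N$). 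Your description of (b)---``use the $\ti\Om$ gain, then apply $I_N$''---is in fact this correct route; the fix for (a) is to abandon the reduction to the first lemma and do the same: target $L^1_tH^1_x$ for the trilinear form, not $SS$.
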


\begin{proof}
(a) Since $\Om(D(uv)_j,w_k)\sim \LR{D}^{-1}((uv)_jw_k)$,
\EQ{
 \|\Om(D(uv)_j,w_k)\|_{L^1H^1} \lec \|u\|_{L^3L^6}\|v\|_{L^3L^6}\|w\|_{L^3L^6},}
and by  non-sharp Sobolev embedding and interpolation,
\EQ{
 \|\Om(D(uv),w)\|_{L^1H^1} \lec \|u\|_{SS}^{1-\te}\|v\|_{SS}^{1-\te}\|w\|_{SS}^{1-\te}\|u\|_X^{\te}\|v\|_X^{\te}\|w\|_X^{\te}.}
For $\Om(n_j,(n'u)_k)$, we have either $2^j\gg 2^k$ or $2^j+2^k\ll 1$. In the first case, we have
\EQ{
 \pt\|\Om(n_j,(n'u)_k)\|_{L^1H^1}
 \lec \|D^{-1+5\e}n_j\|_{(\fr 12-\e,2\e\pm\fr {\e}6,0)_\pm} \|D^{-5\e}(n'u)_k\|_{(\fr 12+\e,\fr 12-2\e\pm\fr {\e}6,0)_\cap}
 \pr\lec \|n_j\|_{(\fr 12-\e,2\e\pm\fr {\e}6,-1+5\e)_\pm} \|n'\|_{(2\e,\fr 12-\fr 73\e\pm\fr {\e}6,-5\e)_\pm} \|u\|_{(\fr 12-\e,2\e\pm \fr {\e}3,5\e)_\cap},}
where we used the product estimate for negative Sobolev spaces for $n'u$.
In the second case $2^j+2^k\ll 1$, we have
\EQ{
 \pt\|\Om(n_j,(n'u)_k)\|_{L^1H^1}
 \lec \|D^{-1}n_j\|_{(\fr 12-\e,\fr \e3\pm\fr {\e}6,0)_\pm} \|(n'u)_k\|_{(\fr 12+\e,\fr 12-\fr \e3\pm\fr {\e}6,0)_\cap}
 \pr\lec \|n_j\|_{(\fr 12-\e,\fr \e3\pm\fr {\e}6,-1)_\pm} \|n'\|_{(2\e,\fr 12-\fr 73\e\pm\fr {\e}6,-5\e)_\pm} \|u\|_{(\fr 12-\e,\fr {11}3\e\pm \fr {\e}3,5\e)_\cap}.}
Hence, by  non-sharp Sobolev embedding and interpolation,
\EQ{
 \|\Om(n,n'u)\|_{L^1H^1}
 \lec \|n\|_{SW}^{1-\te}\|n'\|_{SW}^{1-\te}\|u\|_{SS}^{1-\te}\|n\|_Y^{\te}\|n'\|_Y^{\te}\|u\|_X^{\te}.}

(b) We have $D\ti \Om\sim\LR{D}^{-1}$ on each dyadic piece, so
\EQ{
 \pt\|D\ti\Om((nu)_j,u'_k)\|_{L^1L^2} \lec \|\ti\Om((nu)_j,u'_k)\|_{(1,\fr 56-\fr 53\e,-5\e)}
 \pr\lec \|(nu)_j\|_{(\fr 12+\e,\fr 23-\e,-5\e)}\|u'_k\|_{(\fr 12-\e,\fr 16+\e,5\e)}
 \pr\lec \|n\|_{(2\e,\fr 12-\fr 73\e\pm\fr {\e}6,-5\e)_\pm}
 \|u\|_{(\fr 12-\e,\fr 16+\e\pm\fr {\e}6,5\e)_\cap}\|u'_k\|_{(\fr 12-\e,\fr 16+\e,5\e)},}
where we used the product estimate twice, but did not use any restriction on $j,k$. Hence we have the same estimate on $\ti\Om(u_j,(n'u)_k)$, and so
\EQ{
 \pt\|D\ti\Om(nu,u')\|_{L^1L^2}+\|D\ti\Om(u,nu')\|_{L^1L^2}
 \pr\lec \|n\|_{SW}^{1-\te}\|u\|_{SS}^{1-\te}\|u'\|_{SS}^{1-\te}\|n\|_Y^{\te}\|u\|_X^{\te}\|u'\|_X^{\te}.}
Thus, the proof is completed.
\end{proof}

Note that in the above estimates we needed the $L^\I_t$-type norms only for the bare bilinear terms, but not for the Duhamel terms. Thus we have obtained 
\begin{lem}\label{B-Q-T}
There exist $\te>0$, $\y>0$ and $C(\be)>1$ such that for each $\be\gg 1$ and any $\vec u_1,\vec u_2,\vec u_3$, we have
\EQ{\label{NL est}
 \pt2^{\te\be}\|B(\vec u_1,\vec u_2)\|_S+\|Q(\vec u_1,\vec u_2)\|_S/C(\be) \lec \|\vec u_1\|_S^{1-\te}\|\vec u_2\|_S^{1-\te}\|\vec u_1\|_Z^\te\|\vec u_2\|_Z^\te,
 \pr\|T(\vec u_1,\vec u_2,\vec u_3)\|_S \lec \|\vec u_1\|_S^{1-\te}\|\vec u_2\|_S^{1-\te}\|\vec u_3\|_S^{1-\te}\|\vec u_1\|_Z^\te\|\vec u_2\|_Z^\te\|\vec u_3\|_Z^\te.}
For the Duhamel terms we have also 
\EQ{\label{Q-T}
 \pt\|Q(\vec u_1,\vec u_2)\|_S \lec C(\be)\|\vec u_1\|_{\widetilde{S}}\|\vec u_2\|_{\widetilde{S}},
 \pr\|T(\vec u_1,\vec u_2,\vec u_3)\|_S \lec \|\vec u_1\|_{\widetilde{S}}\|\vec u_2\|_{\widetilde{S}}\|\vec u_3\|_{\widetilde{S}},}
 where
 \EQ{
 \pt \widetilde{S}:=\widetilde{SS}\times \widetilde{SW},
 \pr \widetilde{SS}:=\LR{D}^{-1}[(\eta,\fr 12-\fr 25\eta,
 \fr 45\eta)\cap(\fr 12,\fr 3{10}-\fr\ka3,\fr 25-\ka)],
 \pr \widetilde{SW}:=(\eta,\fr 12-\fr 12\eta,
 -\fr 14\eta)\cap(\fr 12,\fr 14-\fr\ka3,-\fr 14-\ka).
 }
\end{lem}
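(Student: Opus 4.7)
The proof is essentially a packaging of the three preceding sub-lemmas into the product norm $S=SS\times SW$. I outline the plan in three steps.

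First, for \eqref{NL est}: for $B(\vec u_1,\vec u_2)=(\Om(N_1,u_2),D\ti\Om(u_1,\ba u_2))$ the bare-bilinear sub-lemma gives both coordinates in $SS$ and $SW$ respectively, each with a common $2^{-\te\be}$-prefactor coming from the frequency restriction to $XL\cup LL$ (resp.\ $XL\cup LX$) where the resonance is gained; transposing this prefactor produces the $2^{\te\be}$ on the left of \eqref{NL est}. For $Q$, the Duhamel bilinear sub-lemma gives the bound on $I_u(nu)_{LH\cup RL}$ in $SS$ and on $I_ND(u\ba u)_{HH\cup RR}$ in $SW$ with the constant $C(\be)$ absorbing the $\be$-dependence of the frequency thresholds in those regions. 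For $T$, the Duhamel trilinear sub-lemma produces the $S$-bound with no $\be$-loss. Adding these coordinate-wise, with the interpolation structure $\|\cdot\|_S^{1-\te}\|\cdot\|_Z^\te$ already present in each input, yields \eqref{NL est}.

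Second, for \eqref{Q-T}: I would revisit the proofs of the Duhamel bilinear and trilinear sub-lemmas but stop before the last interpolation step that inserts the smallness norm $Z$. Instead one keeps the non-sharp Strichartz triples $(b,d,s)$ as they appear and checks that each is contained in $\widetilde{SS}$ or $\widetilde{SW}$. The definition of $\widetilde S$ is rigged so that interpolation between its two defining pieces (the one at $b=\eta$ and the one at $b=\fr12$) covers all the intermediate admissible exponents appearing in the sub-lemma proofs; the $L^{1/\eta}_t$ piece at small $b=\eta$ replaces the role that $L^\I_t$ played together with the smallness coming from $Z$. Thus $Q$ and $T$ can be bounded entirely in $\widetilde S$, with the $C(\be)$-loss on $Q$ still coming from the radial-improved Strichartz on the $\be$-dependent frequency regions.

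The main obstacle is bookkeeping: one must tabulate every Strichartz triple appearing in the three sub-lemma proofs (including the $\pm\e$ and $\pm\ka/3$ perturbations) and verify (i) radial-improved $\dot H^s$- or $L^2$-admissibility, (ii) containment in $S$, $\widetilde S$, or an interpolant with $Z$, and (iii) absolute convergence of the associated dyadic sums via Bernstein, which holds because all exponents are chosen away from the sharp endpoints. The one structural point to highlight is that the bare bilinear term $B$ genuinely requires the $L^\I_t$-norm and therefore cannot be absorbed into $\widetilde S$ alone; this is precisely why \eqref{Q-T} is stated only for $Q$ and $T$, and it is the same feature that motivates confining $B$ to the non-resonant regions $XL\cup LL$ and $XL\cup LX$ on which the gain $2^{-\te\be}$ is available.
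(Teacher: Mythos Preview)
Your proposal is correct and matches the paper's approach exactly: the paper does not give a separate proof of this lemma but simply states ``Thus we have obtained'' after the three sub-lemmas, together with the remark that the $L^\infty_t$-type norms were needed only for the bare bilinear terms $B$ and not for the Duhamel terms $Q,T$. Your explanation of how \eqref{NL est} is the direct packaging and how \eqref{Q-T} follows by re-reading the Duhamel sub-lemma estimates without the final $Z$-interpolation (using the $\widetilde S$ exponents in place of $L^\infty_t$) is precisely the intended argument, spelled out in more detail than the paper itself provides.
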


\subsection{Nonlinear profile approximation} We will prove Lemma \ref{NL
profile} by the following two lemmas.

\begin{lem}[Stability]\label{Stability}
For any $A>0$ and $\sigma>0$, there exists $\varsigma>0$ with the following property: 
Suppose that $ \vec  u_a$ satisfies
$\|\vec u_a\|_{S(0,\I)}\leq A$ and approximately solves the Zakharov system in the sense that 
\begin{equation*}
\begin{split}
\vec u_a=U(t)\vec u_a(0)-U(t)B(\vec u_a(0))+NL(\vec u_a)+\vec e\\
\end{split}
\end{equation*}
and $\|\vec e\|_{S(0,\I)}\leq \varsigma$. 
Then for any initail data $\vec u(0)$ satisfying $\|\vec u(0)-\vec u_a(0)\|_{H^1\times L^2}<\varsigma$, there is a unique global solution $\vec u$ satisfying 
$\|\vec u-\vec u_a\|_{S(0,\I)}< \sigma$.
\end{lem}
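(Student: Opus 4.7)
The plan is a concatenated perturbation argument in the spirit of Kenig--Merle \cite{KM}, adapted to the normal-form integral equation. First I would fix $\be\gg 1$ once (so that $2^{-\te\be}\ll 1$ in Lemma~\ref{B-Q-T}), then partition $[0,\I)=\bigsqcup_{k=0}^{J-1}I_k$, $I_k=[t_k,t_{k+1}]$, so that $\|\vec u_a\|_{S(I_k)}\le\de$ for a small parameter $\de$ to be chosen last; the number $J$ of intervals is controlled by $A/\de$. The strategy is to propagate the closeness of $\vec w:=\vec u-\vec u_a$ from $I_k$ to $I_{k+1}$ with controlled multiplicative loss.

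On each $I_k$ I restart the normal-form integral equation at $t_k$, producing a bare boundary term $-U(t-t_k)B(\vec u(t_k))$ and Duhamel integrals $Q_{t_k},T_{t_k}$ from $t_k$; the analogous expression holds for $\vec u_a$ with the portion of $\vec e$ lying in $I_k$ (plus a harmless free-propagation correction). Subtracting and expanding via multi-linearity---e.g.\ $B(\vec u)-B(\vec u_a)=B(\vec w,\vec u_a)+B(\vec u_a,\vec w)+B(\vec w,\vec w)$, and likewise for $Q,T$---the difference $\vec w$ satisfies an integral equation whose nonlinear terms are linear, quadratic, and cubic in $\vec w$ with coefficients built from $\vec u_a$. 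Bounding each term by Lemma~\ref{B-Q-T}, together with the elementary embedding $\|\cdot\|_Z\lec\|\cdot\|_S$ (which holds in $\R^3$ by Sobolev for $u$ and by Bernstein for $n$), the linear-in-$\vec w$ terms pick up a coefficient $\lec(2^{-\te\be}+C(\be))\de$, which is absorbable once $\de$ is chosen small after $\be$. A standard continuity/bootstrap argument then yields
\begin{equation*}
\|\vec w\|_{S(I_k)}\le C_0\bigl(\|\vec w(t_k)\|_{H^1\times L^2}+\|\vec e\|_{S(I_k)}\bigr),
\end{equation*}
provided the right-hand side is small enough.

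For the iteration, since $S$ contains $L^\I_t(H^1\times L^2)$, one has $\|\vec w(t_{k+1})\|_{H^1\times L^2}\le\|\vec w\|_{S(I_k)}$. Setting $a_k:=\|\vec w(t_k)\|_{H^1\times L^2}$ gives the recursion $a_{k+1}\le C_0(a_k+\varsigma)$, whence $\sup_k\|\vec w\|_{S(I_k)}\lec(2C_0)^{J+1}\varsigma$. Choosing $\varsigma\ll\min(\sigma,\de)/(2C_0)^{J+1}$---the $\de$-bound guaranteeing the continuity argument closes at each step---one concludes $\|\vec w\|_{S(0,\I)}\le\sigma$; uniqueness of $\vec u$ follows from the same contraction.

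The main obstacle is the bare bilinear term $B(\vec u(t_k))$ produced by the normal form: unlike the Duhamel pieces $Q,T$, it carries no small factor from time integration, so without care its contribution would not be absorbable in the iteration. The resolution is exactly the dichotomy built into Lemma~\ref{B-Q-T}: the $B$-bound gains the factor $2^{-\te\be}$, which can be made small once and for all by fixing $\be$ large. A secondary subtlety is that $\vec w$ is not assumed to possess any $Z$-smallness, but since $Z$ consists of weak negative-regularity Besov norms it is controlled by the ambient $S$-norm, and the linear-in-$\vec w$ terms are then absorbed using the smallness of $\|\vec u_a\|_{S(I_k)}$ on each subinterval.
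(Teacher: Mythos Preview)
Your overall architecture---iterated perturbation with a bootstrap on each subinterval---is exactly the paper's, but there is a genuine gap in the partitioning step. You propose to subdivide $[0,\I)$ so that $\|\vec u_a\|_{S(I_k)}\le\de$; this is impossible, because (as you yourself use two paragraphs later) $S$ contains $L^\I_t(H^1\times L^2)$. The sup-in-time component of $\|\vec u_a\|_{S(I_k)}$ is simply $\sup_{t\in I_k}\|\vec u_a(t)\|_{H^1\times L^2}$, which does not become small on short intervals. Consequently your claimed linear coefficient $(2^{-\te\be}+C(\be))\de$ is not what the estimates actually give.

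The paper's fix is twofold. First, for the Duhamel pieces $Q$ and $T$ it invokes \eqref{Q-T} rather than \eqref{NL est}: these bounds are in terms of the auxiliary norm $\widetilde S$, which is built only from $L^{1/\eta}_t$ and $L^2_t$ components with $\eta>0$, and therefore \emph{can} be made small on subintervals. The partition is chosen so that $C(\be)\|\vec u_a\|_{\widetilde S(I_j)}+C\|\vec u_a\|_{\widetilde S(I_j)}^2<\tfrac14$, with $J=J(A,\be)$. Second, for the bare bilinear term $B$ one cannot gain smallness from the interval at all; instead $\be$ is chosen depending on $A$ so that $2^{-\te\be}CA<\tfrac14$, and the linear-in-$\vec w$ contribution from $B$ is bounded by $2^{-\te\be}A\|\vec w\|_S$ using the global bound $\|\vec u_a\|_S\le A$. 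Thus the absorbable coefficient is $2^{-\te\be}A+C(\be)\|\vec u_a\|_{\widetilde S(I_j)}+C\|\vec u_a\|_{\widetilde S(I_j)}^2$, not the $(2^{-\te\be}+C(\be))\de$ you wrote. With these two corrections your iteration goes through as stated.
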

%%%%%%%%%%%%%%%%%%%%%%%%%%%%%%%%%%%%%%%%%%%%%%%%%%%%%%%%%%%%%%%%%%%%%%%%%%%%%%%%%%%%%%%%%%%%%%%% the proof of stability
\begin{proof}
Denote $\vec u_{\rhd}=\vec u_a-\vec u$, 
then $\|\vec u_{\rhd}(0)\|_{H^1\times L^2}\leq \varsigma$ and
\EQ{
\begin{split}
\vec u_{\rhd}=&U(t)\vec u_{\rhd}(0)-U(t)B(\vec u_a(0))+NL(\vec u_a)+\vec e
+U(t)B(\vec u(0))-NL(\vec u).
 \end{split}
}
Thus
\EQ{
\begin{split}
\|\vec u_{\rhd}\|_{S}\lesssim 2 \varsigma+\|B(\vec u_a)-B(\vec u)\|_{S}
+\|Q(\vec u_a)-Q(\vec u)\|_{S}
+\|T(\vec u_a)-T(\vec u)\|_{S}.
\end{split}
}

Noting that $Z\supset S$, by \eqref{NL est} we have
\begin{align*}
\|B(\vec u_a)-B(\vec u)\|_{S}
&\leq\|B(\vec u_a,\vec u_\rhd)\|_{S}+\|B(\vec u_\rhd,\vec u_a)\|_{S}+\|B(\vec u_\rhd,\vec u_\rhd)\|_{S}\\
&\lesssim 2^{-\te\be} A\|\vec u_{\rhd}\|_{S}+2^{-\te\be}\|\vec u_{\rhd}\|^{2}_{S}.
\end{align*}
By \eqref{Q-T}, we have
\EQ{
\begin{split}
\|Q(\vec u_a)-Q(\vec u)\|_{S}\leq& C(\beta) \left(\|\vec u_a\|_{\widetilde{S}}\|\vec u_{\rhd}\|_{S}+\|\vec u_{\rhd}\|^{2}_{S}\right),\\
\|T(\vec u_a)-T(\vec u)\|_{S}\leq& C\left(\|\vec u_a\|^2_{\widetilde{S}}\|\vec u_{\rhd}\|_{S}+ \|\vec u_a\|_{\widetilde{S}}\|\vec u_{\rhd}\|^{2}_{S} +\|\vec u_{\rhd}\|^{3}_{S}\right).
\end{split}
}
So
\EQ{\label{diff1}
\begin{split}
\|\vec u_{\rhd}\|_{S}\leq 2\varsigma&+(2^{-\te\be}C A+C(\beta)\|\vec u_a\|_{\widetilde{S}}+C\|\vec u_a\|_{\widetilde{S}}^{2})\|\vec u_{\rhd}\|_{S}\\
&
+ (2^{-\te\be}C+C(\beta)+C\|\vec u_a\|_{\widetilde{S}})\|\vec u_{\rhd}\|^{2}_{S}+C\|\vec u_{\rhd}\|^{3}_{S}.
\end{split}
}
Choose $\beta=\beta(A)$ such that $2^{-\te\beta}CA<\fr14$.
Then we  subdivide the time interval $[0,\I)$ into finite subintervals $I_j=[t_j, t_{j+1}]$, $j=1,\cdots, J$, $J=J(A,\beta)$ such that
\EQ{C(\beta)\|\vec u_a\|_{\widetilde{S}(I_j)}+C\|\vec u_a\|^{2}_{\widetilde{S}(I_j)}<\fr14}
for each $j$.
 Let $\varsigma=\varsigma(A, \sigma, \beta,J)$ small such that
\EQ{C(\beta)8^{2J}\varsigma\ll1,\ \ 8^{2J}\varsigma \ll \sigma.}
Then by \eqref{diff1} on $I_1$, we have $\|\vec u_{\rhd}\|_{S(I_1)}\leq 8\varsigma$ and
\begin{align*}
 &\|\vec u_{\rhd}(t_2)\|_{H^1\times L^2}\\
 \leq &\|U(t_2-t_1)\vec u_{\rhd}(t_1)\|_{H^1\times L^2}
 +\|U(t_2-t_1)B(\vec u_a(t_1))-U(t_2-t_1)B(\vec u(t_1))\|_{H^1\times L^2}\\
 &+\|B(\vec u_a(t_2))-B(\vec u(t_2))\|_{H^1\times L^2}+\|Q(\vec u_a)-Q(\vec u)\|_{S(I_1)}\\
 &+\|T(\vec u_a)-T(\vec u)\|_{S(I_1)}+\|\vec e\|_{S(I_1)}\\
 \leq&
 2\varsigma+4\cdot8\varsigma  \leq8^2\varsigma.
\end{align*}
Using the same analysis as above, we can get  $\|\vec u_{\rhd}\|_{S(I_2)}\leq 8^3\varsigma$.
Iterating this for $I_2,I_3\etc I_J$, 
 we obtain $\|\vec{u}_a-\vec{u}_1\|_{S}\lec 8^{2J}\varsigma \ll \sigma$, the desired result was obtained.
\end{proof}

With $J$ close to $\bar{J}$ and large $n$, our approximate solution is given by 
\EQ{
 \vec{u}_n^J=(u_n^J,N_n^J):= \sum_{j=1}^J(\np u_n^j,\np N_n^j)+(\pe u_n,\pe N_n).}
To prove Lemma \ref{NL profile}, we only need to  prove that $\vec u_n^J$ is an approximate solution of the Zakharov system. In fact, we have
\begin{lem}\label{per}
Suppose that $\|(\np u_n^j,\np N_n^j)\|_S<\I$ for all $j<\bar J$, then
\begin{align*}
\lim_{J\to \bar{J}}\limsup_{n\to\I} \|U(t)B(\vec u_n^J(0))-NL(\vec u_n^J) -\sum_{j=1}^J[U(t)B(\vec{\np u}_n^j(0))-NL(\vec{\np u}_n^j)]\|_{S}=0.
\end{align*}
\end{lem}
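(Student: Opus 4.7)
The strategy is a standard multilinear expansion followed by an orthogonality-of-profiles argument, adapted to the normal-form decomposition $NL=B+Q+T$. Write $\vec v_n^j:=(\np u_n^j,\np N_n^j)$ and $\vec r_n:=(\pe u_n,\pe N_n)$, so that $\vec u_n^J=\sum_{j=1}^J\vec v_n^j+\vec r_n$. Expanding by the bilinearity of $B,Q$ and the trilinearity of $T$, the diagonal contributions exactly reproduce the subtracted sum $\sum_j[U(t)B(\vec{\np u}_n^j(0))-NL(\vec{\np u}_n^j)]$, so the lemma reduces to showing that all remaining cross terms tend to $0$ in $S$. These cross terms split into two classes: (A) terms containing $\vec r_n$ in at least one slot, and (B) multilinear terms whose entries are all nonlinear profiles but with at least two distinct indices.

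For class (A), apply Lemma~\ref{B-Q-T}: each such term is controlled by \eqref{NL est} and \eqref{Q-T} by a product of $(1-\te)$-powers of $S$-norms and $\te$-powers of $Z$-norms with some $\te>0$. The $S$-norms are uniformly bounded, since $\|\vec v_n^j\|_S<\I$ by hypothesis and $\|\vec r_n\|_S=O(1)$ via Strichartz applied to the $H^1\times L^2$-bounded remainder data. The required smallness is then supplied by $\|\vec r_n\|_Z^{\te}\to 0$ from \eqref{smallness}, which drives each such contribution to zero after taking $\limsup_{n\to\I}$ and then $\lim_{J\to\bar J}$.

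For class (B), I would exploit the time separation \eqref{separation}. For the Duhamel cross terms in $Q$ and $T$, approximate each $\np u^j$ by a truncation $\np u^{j,R}$ whose $S$-essential support lies in $|t|\le R$; the tail error is as small as desired, since $\|\np u^j\|_{S(|t|>R)}\to 0$ as $R\to\I$. After shifting by $t_n^j$ the essential supports of $\vec v_n^{j,R}$ become pairwise disjoint by \eqref{separation} for $n$ large, so all Duhamel integrands involving two distinct profile inputs vanish pointwise in $s$, hence in $S$. For the bare bilinear boundary term $U(t)B(\vec v_n^j(0),\vec v_n^k(0))$ with $j\ne k$, Strichartz reduces the $S$-norm to $\|B(\vec v_n^j(0),\vec v_n^k(0))\|_{H^1\times L^2}$. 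Since $t_\I^j\in\{0,\pm\I\}$ and \eqref{separation} forbids two distinct profiles with $t_\I=0$, at least one of the pair has $|t_n|\to\I$; the scattering of $\np u^j$ yields $\vec v_n^j(0)=U(-t_n^j)(\lp f^j,\lp g^j)+o(1)$ in $H^1\times L^2$, and the dispersive decay of free flows gives $\|U(-t_n^j)(\lp f^j,\lp g^j)\|_Z\to 0$. A single-time bilinear estimate of the same interpolation type as in the bare bilinear lemma then forces $\|B(\vec v_n^j(0),\vec v_n^k(0))\|_{H^1\times L^2}\to 0$.

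The main obstacle is uniformity as $J\to\bar J$, since naively the number of cross terms grows like $J^2$ in $B,Q$ and $J^3$ in $T$. This is controlled by the energy orthogonality built into Lemma~\ref{free prof}: only finitely many profiles carry $H^1\times L^2$-energy above any fixed threshold, and the remaining low-energy profiles have uniformly small nonlinear $S$-norm by the small-data scattering of \cite{GN}. Their aggregate contribution to the cross terms may therefore be absorbed by the same mechanism used for class~(A), closing the estimate and yielding the desired vanishing.
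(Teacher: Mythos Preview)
Your overall architecture matches the paper's: expand multilinearly, split into remainder terms (class A) and profile--profile cross terms (class B), and control uniformity in $J$ via energy orthogonality and small-data scattering. The class~(A) argument and the uniformity discussion are essentially those of the paper.

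Your class~(B) argument, however, has a genuine gap. You claim $\|\np u^j\|_{S(|t|>R)}\to 0$ as $R\to\I$, but this is false: $S$ contains the $L^\I_t(H^1\times L^2)$ component, and a scattering solution stays bounded (not decaying) in $H^1\times L^2$, so the restricted $S$-norm does not vanish. Your truncation-to-compact-time step therefore does not give a small error in $S$, and the disjoint-support mechanism does not close. The same issue afflicts the bare bilinear cross term $B(\vec v_n^j,\vec v_n^k)$ at general $t$, which your proposal addresses only at $t=0$.

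The paper handles class~(B) by a different mechanism that explains why the interpolation structure in \eqref{NL est} was set up in the first place. One does not truncate in $S$; instead one uses that the $Z$-norm \emph{does} decay at infinity: by scattering, $\|\np u^j\|_{Z(|t|>T)}\to 0$ as $T\to\I$, since $Z$ is an $L^\I_t$ of spatial norms strictly weaker than $H^1\times L^2$. For $j\ne k$ one then splits the time line at $(t_n^j+t_n^k)/2$ and applies \eqref{NL est} on each half, placing the $Z^\te$-factor on the profile centered far away; this treats $B$, $Q$, and $T$ uniformly. Your argument is repairable by replacing $S$ with $Z$ (or with $\widetilde S$ for the Duhamel pieces via \eqref{Q-T}), but as written it does not go through.
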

Note that $\|(\np u_n^j,\np N_n^j)\|_S$ does not depend on $n$. 
\begin{proof}
By triangle inequality, it suffices to show that
\EQ{\label{cross term}
\begin{split}
\lim_{n\to\I}&\|\sum_{j\le J}[U(t)B(\vec{\np u}_n^j(0))-NL(\vec{\np u}_n^j)]\\
&-[U(t)B(\sum_{j\le J}\vec{\np u}_n^j(0))
   -NL(\sum_{j\le J}\vec{\np u}_n^j)]\|_{S}=0,
\end{split}}
and
\EQ{\label{diff2}
\begin{split}
 \lim_{J\to \bar{J}}\limsup_{n\to\I}&\|
 [U(t)B(\vec u^J_n(0))-NL(\vec u^J_n)]\\
 &-[U(t)B(\vec u^J_n(0)-\pe{\vec u}_n(0))-NL(\vec u^J_n-\pe{\vec u}_n)]\|_{S}= 0.
\end{split}
}

In fact,
\begin{align*}
L.H.S \ of\  \eqref{cross term}
\lesssim  \sum_{i\neq j}\left(\|B(\vec{\np u}_n^i, \vec{\np u}_n^j)\|_{S}
+\|Q(\vec{\np u}_n^i, \vec{\np u}_n^j)\|_{S}\right)+\sum_{i\neq j\text{ or }j\neq k}\|T(\vec{\np u}_n^i, \vec{\np u}_n^j,\vec{\np u}_n^k)\|_{S}.
\end{align*}
For each $i\not=j$, we have $|t_n^i-t_n^j|\to \I$. for the subsequence $t_n^i-t_n^j\to\I$, we have by \eqref{NL est}, 
\EQ{\label{B-decay}
&\|B(\vec{\np u}_n^i, \vec{\np u}_n^j)\|_{S}
\lec \|B(\vec{\np u}^i(\cdot-t_n^i), \vec{\np u}^j(\cdot-t_n^j))\|_{S(-\I,(t_n^i+t_n^j)/2)\cap S((t_n^i+t_n^j)/2,\I)}
 \pr\lec \|\vec{\np u}^i\|_S^{1-\te}\|\vec{\np u}^j\|_S^{1-\te}\left[\|\vec{\np u}^i\|_{Z(-\I,t_n^j-t_n^i)/2)}^\te \|\vec{\np u}^j\|_Z^\te +\|\vec{\np u}^i\|_Z^\te \|\vec{\np u}^j\|_{Z((t_n^i-t_n^j)/2,\I)}^\te \right].}
For each $j$, by the scattering of $\vec{\np u}^j$,
\EQ{\label{decay}
 \lim_{T\to\I}\|\vec{\np u}^j\|_{Z(|t|\geq T)}=0,}
so from the above estimate 
\EQ{
 \|B(\vec{\np u}_n^i, \vec{\np u}_n^j)\|_{S}\to 0,}
as $t_n^i-t_n^j\to\I$. The case $t_n^i-t_n^j\to-\I$ is treated similarly, as well as the other terms $Q$ and $T$. 
Thus we obtain 
\EQ{
\begin{split}
&\|B(\vec{\np u}_n^i, \vec{\np u}_n^j)\|_{S}\to 0 \text{ for } i\neq j, \\
&\|Q(\vec{\np u}_n^i, \vec{\np u}_n^j)\|_{S}\to 0 \text{ for } i\neq j, \\
&\|T(\vec{\np u}_n^i, \vec{\np u}_n^j,\vec{\np u}_n^k)\|_{S}\to 0  \text{ for } i\neq j  \text{ or }i=j\neq k, 
\end{split}
}
from which \eqref{cross term} follows immediately. 

In order to prove \eqref{diff2}, we need a uniform bound on the approximate solutions $\vec u_n^J$ for $J\to\bar J$. 
Note that \eqref{linear orth} implies that $\|\vec{\lp u_n}^j(0)\|_{H^1\times L^2}\ll 1$ except for a bounded number of $j$. Let $A$ be the set of $j$ in the latter case. Then for all $j\not\in A$, the small data scattering implies that 
\EQ{
 \|(\np u_n^j,\np N_n^j)\|_S\lesssim \|(\lp u_n^j(0),\lp N_n^j(0))\|_{H_x^1\times L_x^2} \ll 1. } 
Then by the orthogonality in $H_x^1\times L_x^2$ and $|t_n^i-t_n^j|\to\I$, we deduce 
\EQ{
 \|\sum_{j\not\in A}\vec{\np u}_n^j\|^2_S&\lesssim \sum_{j\not\in A}\|\vec{\np u}_n^j\|^2_S
 \lesssim \sum_{j\not\in A}\|\vec{\lp u}_n^j\|^2_{H_x^1\times L_x^2}\lec 1.}
Since the number of the remaining components $j\in A$ are bounded, we obtain 
\EQ{ \label{unif bound on approx}
 \sup_{J<\ba J} \sup_n \|\vec u_n^J\|_S <\I.}
The left hand side of \eqref{diff2} is bounded by 
\EQ{
 &\|B(\vec u^J_n)-
B(\vec u^J_n-\pe{\vec u}_n)
\|_{S}
+\|Q(\vec u^J_n)
-Q(\vec u^J_n-\pe{\vec u}_n)\|_{S}\\
&+\| T(\vec u^J_n)-T(\vec u^J_n-\pe{\vec u}_n)\|_{S} .}
By \eqref{NL est}, \eqref{smallness} and \eqref{unif bound on approx},
\EQ{
 \lim_{J\to \bar{J}}\limsup_{n\to\I}\|B(\vec u^J_n)-B(\vec u^J_n-\pe{\vec u}_n)\|_{S}= 0.
}
One can estimates  $Q$ and $T$ similarly. Then  \eqref{diff2} was proved.
\end{proof}

\begin{proof}[Proof of Lemma \ref{NL profile}]
By the construction of $\vec u^J_n$,
\EQ{
 \lim_{n\to\infty}\|\vec u^J_n(0)-\vec{u}_n(0)\|_{H^1\times L^2}= 0.}
By Lemma \ref{per} and Lemma \ref{Stability}, passing to a subsequence if necessary, we obtain $\|\vec u^J_n-\vec{u}_n\|_{S}\ll 1$ for large $J$ and $n$. 
\end{proof}

\begin{proof}[Proof of Lemma \ref{lem:crit}]
By the definition of $E_\la^*$, there is a sequence of
global solutions $(u_n,N_n)$ in $\K^+_\la(a)$ such that
 \EQ{
 \lim_{n\rightarrow\infty}\mathscr{E}_\la(u_n,N_n)=E_\la^*, \  \lim_{n\rightarrow\infty}\|(u_n,N_n)\|_{S(-\I,0)}=\lim_{n\rightarrow\infty}\|(u_n,N_n)\|_{S(0,\I)}=\I.
 }
To see this, first note that since $(u_n,N_n)$ are in $\K^+_\la(a)$, they are bounded in $H^1\times L^2$ by the energy. Hence if $\|(u_n,N_n)\|_S\to\I$ then the $L^2_t$ part must diverge, and we can translate $(u_n,N_n)$  in $t$ so that the norm diverges both on $(-\I,0)$ and on $(0,\I)$. 

For the sequence $(u_n(0),N_n(0))$, we use the linear profile
decomposition. For the associated nonlinear profile $(\np u_n^j,\np
N_n^j)$, we must have $K(\np u_n^j(0))\geq0$ for each $j$. In
fact, if we denote \EQ{G_\la(\varphi)=J_\la(\varphi)-\fr 13 K=
\left(\frac{1}{2}-\frac{1}{3}\right)\|\na u\|_2^2+\fr
{\la^2}2\|u\|_2^2>0,}
 then
\EQ{ \label{char Q}
J_\la(Q_\la)&=\inf\{J_\la(\fy)\mid \fy\not=0,\ K(\fy)=0\}\\
&=\inf\{G_\la(\fy)\mid \fy\not=0,\ K(\fy)=0\}\\
&=\inf\{G_\la(\fy)\mid \fy\not=0,\ K(\fy)\leq0\}.} 
By the orthogonality, 
\EQ{
 \varlimsup_{n\to\I} G_\la(u_n(0))= \varlimsup_{n\to\I} \left(\sum_{j=1}^J
G_\la(\np u_n^j(0))+G_\la(\pe u_n(0))\right)\leq \la E_\la^*<
J_\la(Q_\la).} 
Hence, for $n$ sufficiently large, $G_\la(\np u_n^j(0))<
J_\la(Q_\la)$; and then by the third line of \eqref{char Q},
$K(\np u_n^j(0))\geq0$. Noting that 
\EQ{
\lim_{n\to\I} \mathscr{E}_\la(u_n(0),N_n(0))-\sum_{j=1}^J
\mathscr{E}_\la(\np u_n^j(0),\np N_n^j(0))-\mathscr{E}_\la(\pe
u_n(0),\pe N_n(0))=0,}
we have \EQ{\label{energy sum} \sum_{j=1}^J \mathscr{E}_\la(\np
u_n^j(0),\np
N_n^j(0))\leq\lim_{n\rightarrow\infty}\mathscr{E}_\la(u_n,N_n)=E_\la^*.
}

If $\mathscr{E}_\la(\np u_n^j(0),\np N_n^j(0))<E_\la^*$ for all $j<\bar J$, 
 then we have $\|(\np u_n^j,\np N_n^j)\|_{S}<\I$ for all $j$, and so by Lemma
\ref{NL profile}, 
\EQ{\limsup_{n\to\I}\|(u_n,N_n)\|_{S}<\I,} 
which contradicts $\lim_{n\rightarrow\infty}\|(u_n,N_n)\|_{S(0,\I)}=\I$.
Thus, we must have one $j<\ba J$ such that 
 \EQ{
 \mathscr{E}_\la(\np u_n^j(0),\np N_n^j(0))=E_\la^*. }
Without losing generality, we may assume $j=1$. 
Comparing this with \eqref{energy sum}, we have \EQ{
(u_n(0),N_n(0))=U(-t_n)(\lp f^1,\lp g^1)+(u_n^{>1}(0),N_n^{>1}(0)) } and
\EQ{ \|(u_n^{>1}(0),N_n^{>1}(0))\|_{H^1\times L^2}\lesssim
\mathscr{E}_\la(u_n^{>1}(0),N_n^{>1}(0))\rightarrow 0. }

If $t_n \to -\I$, then we have 
\EQ{ \|U(t-t_n)(\lp f^1,\lp g^1)\|_{Z(0,\I)}
 \to 0,} 
and hence
\EQ{ 
 \pt\|U(t)(u_n(0),N_n(0))\|_{Z(0,\I)}
 \pr\lec \|U(t-t_n)(\lp f^1,\lp g^1)\|_{Z(0,\I)}+\|(u_n^{>1}(0),N_n^{>1}(0))\|_{H^1\times L^2}\to 0.}
By Lemma \ref{B-Q-T}, 
\EQ{ 
 \|U(t)B((u_n(0),N_n(0)))\|_{S(0,\I)}+\|NL(U(t)(u_n(0),N_n(0)))\|_{S(0,\I)}\to 0.} 
Then using Lemma \ref{Stability} (with $\vec u_a:=U(t)(u_n(0),N_n(0))$ and $(u_n(0),N_n(0))$ as the initial data), 
we obtain 
\EQ{ \lim_{n\rightarrow\I} \|(u_n,N_n)\|_{S(0,\I)}  < \I. }
which contradicts $\|(u_n,N_n)\|_{S(0,\I)} \to \I$.

If $t_n\rightarrow+\infty$, the argument is similar and we obtain a contradiction by using $\|(u_n,N_n)\|_{S(-\I,0)} \to \I$.

So, the only case left is $t_n \to 0$. In this case, 
\EQ{
 \|(u_n(0),N_n(0))-(\lp f^1,\lp g^1)\|_{H^1\times L^2} \to 0.} 
Let $(u,N)$  be the global solution with initial data
$(u(0),N(0))=(\lp f^1,\lp g^1)$, then $\mathscr{E}_\la(u,N)\leq
E_\la^*$. By stability, we must have 
\EQ{
  \|(u,N)\|_{S(-\I,0)}=\|(u,N)\|_{S(0,\I)}  = \I,} 
since otherwise $(u_n,N_n)$ should be bounded either in $S(-\I,0)$ or in $S(0,\I)$. 
By the definition of $E_\la^*$, $\mathscr{E}_\la(u,N)\geq E_\la^*$
and hence $\mathscr{E}_\la(u,N)= E_\la^*$.

 Since $(u,N)$ is locally in $S$, for any $t_n\in\R$, we have
\EQ{
 \|(u,N)\|_{S(-\I,t_n)}=\I=\|(u,N)\|_{S(t_n,\I)}.
} 
Applying the above argument to the sequence $(u_n(t),N_n(t)):=(u(t+t_n),N(t+t_n))$, we see that $(u(t+t_n),N(t+t_n))$ is precompact in $H^1\times L^2$. Thus we obtain the desired result. 
\end{proof}

\section{Rigidity Theorem}

The main purpose of this section is to disprove the existence of
critical element that was constructed in the previous section under the assumption $E_\la^*<J(Q)$. The
main tool is the spatial localization of the virial identity. We
 prove

\begin{thm}[Rigidity Theorem]
Let $(u,N)$ be a global solution to \eqref{eq:Zak2} satisfying
$K(u)\geq 0$, and $E_Z(u,N)+\lambda^2 M(u)< J_\lambda (Q_\lambda)$
for some $\lambda>0$. Moreover, assume $\{(u,N)(t):t\in \R\}$ is
precompact in $H^1\times L^2$. Then $u=N\equiv 0$.
\end{thm}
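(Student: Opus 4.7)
The plan is to argue by contradiction via a localized virial identity, in the spirit of Kenig--Merle. Suppose $u\not\equiv 0$; then mass conservation forces $\|u(t)\|_{L^2}\equiv\|u(0)\|_{L^2}>0$, and precompactness of $\{(u,N)(t)\}$ in $H^1\times L^2$ combined with the variational characterization \eqref{eq:Qconst} yields a uniform lower bound $K(u(t))\ge c_0>0$. Indeed, any subsequential limit $(\fy,\psi)$ of $(u(t_n),N(t_n))$ with $K(u(t_n))\to 0$ would satisfy $\fy\ne 0$ and $K(\fy)=0$, while $J_\la(\fy)\le E_Z(\fy,\psi)+\la^2 M(\fy)<J_\la(Q_\la)$, contradicting the variational characterization of $Q_\la$.

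Combining this with Lemma~\ref{lem:var} applied pointwise in $t$ to $\nu:=\|N-|u|^2\|_2$ (so that $\nu\|u\|_4^2\le (4K(u)+\nu^2)/\sqrt{6}$), Lemma~\ref{lem:virial} with $d=3$ gives
\begin{equation*}
\p_t\LR{\mathbf{\UJ} v\,|\,A v} = 2K(u)+\tfrac12\|\nu\|_2^2-\LR{N-|u|^2\,|\,|u|^2} \,\ge\, \bigl(2-\tfrac{4}{\sqrt{6}}\bigr)K(u) \,\ge\, c_1 \,>\, 0
\end{equation*}
uniformly in $t$.

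The next task is to transfer this lower bound to the truncated virial $V_R(t)=\LR{\mathbf{\UJ} v\,|\,(AX+XA)v}$ of Section~\ref{sec:growup}. The NLS-type tail errors ($\lec R^{-2}\|u\|_2^3\|\na u\|_2$) and the cubic cross-term errors ($\lec R^{-1}\|\nu\|_2\|u\|_2^{3/2}\|\na u\|_2^{1/2}$) are now handled uniformly in $t$ by the boundedness of $(u,N)$ in $H^1\times L^2$. The main obstacle---and where I expect the real work---is the tail $\rh_R:=\int_{|x|\sim R}|\y|^2/R^2\,dx$ with $\y=D^{-1}(N-|u|^2)$, arising from the nonlocal factor $D^{-1}$. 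I would handle it by a physical-space split $\nu=\nu_{<R'}+\nu_{>R'}$: precompactness makes $\|\nu_{>R'}\|_2$ arbitrarily small uniformly in $t$ for $R'$ large, whose contribution to $\rh_R$ is then controlled by Hardy, while $\y_{<R'}$ enjoys the pointwise decay $|\y_{<R'}(x)|\lec R'^{3/2}|x|^{-2}\|\nu\|_2$ for $|x|\gg R'$, giving a contribution to $\rh_R$ of order $(R'/R)^3$. Choosing $R'\ll R$ suitably forces $\rh_R$ uniformly small, so $\dot V_R\ge c_1/2$ for all $t$ once $R$ is large enough.

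Since precompactness also yields $|V_R(t)|\lec R(\|u\|_2\|\na u\|_2+\|N\|_2^2)\lec R$ uniformly, integrating $\dot V_R\ge c_1/2$ over $[0,T]$ forces $|V_R(T)-V_R(0)|\gec T$, contradicting the $O(R)$ bound for $T\gg R$. Hence $u\equiv 0$, and the $N$-equation reduces to $(i\p_t+\al D)N=0$, so $N(t)=e^{it\al D}N_0$ is a free half-wave. A standard stationary phase argument gives $e^{it\al D}N_0\rightharpoonup 0$ weakly in $L^2$ as $t\to\I$, so precompactness of $\{N(t)\}$ forces the strong and weak limits to coincide, yielding $\|N_0\|_{L^2}=\lim\|N(t_n)\|_{L^2}=0$.
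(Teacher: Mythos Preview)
Your approach is correct and matches the paper's: contradiction via the localized virial $V_R$, with the uniform lower bound $\inf_t K(u(t))>0$ from precompactness plus the variational characterization of $Q_\la$, and Lemma~\ref{lem:var} to make $\dot V_R$ strictly positive, contradicting $|V_R|\lec R$.

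One point needs tightening. The error estimates you cite from Section~\ref{sec:growup} for $NS$ and $QN$ are \emph{one-sided upper bounds} on $\dot V_R$, designed for the grow-up argument. The explicit $O(R^{-1})$ and $O(R^{-2})$ terms are indeed two-sided, but the passages $\LR{\na u|X\na u}\le\|\na u\|_2^2$, $\LR{u_r|r\psi_R'u_r}\le 0$, $\LR{\nu|X\nu}\le\|\nu\|_2^2$, $\LR{\y_r|r\psi_R'\y_r}\le 0$ all go the wrong way for a \emph{lower} bound on $\dot V_R$. Mere boundedness of $(u,N)$ in $H^1\times L^2$ does not reverse them; you need uniform smallness of the spatial tails of $|\na u|^2$, $|u|^4$, $|\nu|^2$, $|D^{-1}\na\nu|^2$, all of which follow immediately from precompactness of the orbit. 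The paper collects exactly these in its Step~2.

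On $\rho_R$: your physical-space split of $\nu$ with the kernel pointwise bound works, but the paper's argument is shorter. Since $f\mapsto f/|x|$ is bounded $\dot H^1\to L^2$ by Hardy, precompactness of $\{\nu(t)\}$ in $L^2$ transfers to precompactness of $\{D^{-1}\nu(t)/|x|\}$ in $L^2$, hence uniformly small tails, and $\rho_R\lec\int_{|x|\ge R/2}|D^{-1}\nu|^2/|x|^2\,dx\to 0$ uniformly in $t$ with no splitting needed.
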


\begin{proof}
By contradiction, we assume $(u,N)\ne (0,0)$. Then by the
compactness we may assume further $u\ne 0$, since otherwise $N$ would be a free wave and dispersive. We divide the proof into
the following three steps:

{\bf Step 1:} Energy trapping.

We claim that 
\EQ{
  c:=\inf_{t\in \R}K(u)>0.}
If not, then there exists $\{t_n\}$ with $t_n\to t_*\in [-\I,\I]$,
and $K(u(t_n))\to 0$. By the precompactness of $\{u(t):t\in \R\}$, we
get that up to a sequence $(u(t_n),N(t_n))$ converges to some $(f,g)$ in $H^1\times L^2$. Then we have $K(f)=0$, $J_\la(f)\leq
E_Z(f,g)+\la^2 M(f)=E_Z(u,N)+\la^2 M(u)<J_\la(Q_\la)$. By the
variational characterization of $Q_\lambda$, we get $f\equiv 0$
which contradicts to the $M(f)=M(u)\ne 0$.

{\bf Step 2:} Uniform small tails.

Let $\nu=\Re N-|u|^2=n-|u|^2$. We claim that for any $\e>0$, there
exists $R>0$ such that at any $t\in \R$, we have
\begin{align*}
\int_{|x|\geq R}\big(|\nabla
u|^2+|u|^2+|u|^4+|u|^6+|\nu|^2+|D^{-1}\nabla
\nu|^2+\frac{|D^{-1}\nu|^2}{|x|^2}\big)dx<\e.
\end{align*}
Indeed, since $\{(u,N)(t):t\in \R\}$ is precompact in $H^1\times
L^2$, by Sobolev embedding and the $L^p$-boundedness of
$D^{-1}\nabla$, we get that $\{u(t)\}$ is precompact in
$L^2,L^4,L^6$, $\{D^{-1}N(t)\}$ is precompact in $\dot{H}^1$, and
$\{D^{-1}\nabla N(t),\nu(t),D^{-1}\nabla \nu\}$ is precompact in
$L^2$. Then the claim follows immediately.

{\bf Step 3:} Contradiction to the local virial estimates.

We recall the local virial estimates obtained in Section
\ref{sec:growup}. For any $R>0$ \EQ{
 V_R(t):=\LR{\UJ u|(AX+XA)u}.}
where $X=X^*$ be the operator of smooth trancation to $|x|<R$ by
multiplication with $\psi_R(x)$. From the proof in Section
\ref{sec:growup} and Corollary \ref{cor:cor} we have
\begin{align}\label{eq:VR bound}
|V_R(t)| \lec R[\|u\|_2\|\na u\|_2+\|N\|_2^2]\lec R.
\end{align}
On the other hand, from Step 2, Step 1, and Lemma \ref{lem:var},
we get
\begin{align*}
V_R'(t)=&\LR{\nu|X\nu}/2 + \LR{\na\y|X\na\y}/2 +
\LR{\y_r|r\psi_R'\y_r}-\frac14\LR{|\y|^2|A_{1}\De\psi_R}\\
&-2\LR{\nu||u|^2}+O(R^{-1}\|\nu\|_2\|u\|_2^{3/2}\|\na
u\|_2^{1/2})\\
&+\LR{\na u|4X\na u}+4\LR{u_r|\psi_R'ru_r}\\
&-3\|u\|_4^4 +O(R^{-2}(\|u\|_2^2+\|u\|_2^3\|\na u\|_2)) \quad \mbox{(obtained in Section \ref{sec:growup})}\\
=&4K(u)+\|\nu\|_2^2-2\LR{\nu||u|^2}+o(1),\ R\to \infty \quad
\mbox{(by Step 2)}\\
\geq&(1-\frac{2}{\sqrt{6}})K(u)+o(1)\geq c/2, \ R\gg 1. \quad
\mbox{(by Step 1 and Lemma \ref{lem:var})}
\end{align*}
Thus we get
\[V_R(t)\geq V_R(0)+c t/2,\]
which contradicts \eqref{eq:VR bound} for sufficiently large $t$.
\end{proof}

\appendix

\section{Construction of wave operators}
Here we briefly skecth a proof for the existence of the wave operators, or the solvability of the final state problem. 
For the construction of a nonlinear profile in the radial setting,  we need only to consider a sequence of solutions in the form
\EQ{
\vec u_n=U(t)\vec f + \int_{-t_n}^t U(t-s)(nu, \alpha D|u|^2)ds,}
with $t_n\to\pm\I$, which is normally transformed into such a form as
\EQ{\label{u_n-wo}
 \vec u_n=&U(t)\vec f-U(t+t_n)B(U(-t_n)\vec f)+B(\vec u_n)
 +Q_{-t_n}(\vec u_n)+T_{-t_n}(\vec u_n),}
where $Q_{-t_n}$ and $T_{-t_n}$ denote respectively $Q$ and $T$ with the Duhamel integration $\int_{-t_n}^t$, and arbitrarily fixed $\be$, say $\be=10$. 
Below we consider only the case $t_n\to-\I$, since the other case is similar. 
The following is the precise statement that we need for the nonlinear profile in this case. 
\begin{lem}\label{Exist}
Let $\vec f\in H^1\times L^2$, $\R\ni t_n\to-\I$, and let $\{\vec u_n\}$ be the sequence of solutions to the Zakharov system with the Cauchy data $\vec u_n(-t_n)=U(-t_n)\vec f$. Then there exist $T\in\R$ and a unique $\vec u\in S(T,\I)$ satisfying 
\EQ{ \label{wave op}
 \pt \vec u=U(t)\vec f+B(\vec u)+Q_{\I}(\vec u)+T_{\I}(\vec u),
 \pq \lim_{n\to\I}\|\vec u_n-\vec u\|_{S(T,\I)}=0,}
as well as the Zakharov system on $(T,\I)$. Moreover, if $\{\vec u_n\}$ is bounded in $L^\I(\R;H^1\times L^2)$, then $\vec u$ is global and the above convergence holds for any $T\in\R$. 
\end{lem}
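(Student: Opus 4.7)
The plan is to construct $\vec u$ on a half-line $(T,\I)$ via Banach fixed point for the integral formulation, and then to pass to the limit $\vec u_n\to\vec u$ using Lemma \ref{Stability}. First, for any $\vec f\in H^1\times L^2$, the radial-improved Strichartz estimates together with dominated convergence give
\EQ{
 \lim_{T\to\I}\|U(t)\vec f\|_{\widetilde{S}(T,\I)}=0,
}
since $\widetilde{S}$ contains no $L^\I_t$ component.

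Next, set $A:=\|\vec f\|_{H^1\times L^2}$ and choose $\be=\be(A)$ in the normal form large enough that $2^{-\te\be}C(1+A)\ll 1$, where $C,\te$ come from Lemma \ref{B-Q-T}. Then pick $T$ so large that $\|U(t)\vec f\|_{\widetilde{S}(T,\I)}\ll 1/C(\be)$. On $(T,\I)$ I would solve
\EQ{
 \vec u=U(t)\vec f+B(\vec u)+Q_{\I}(\vec u)+T_{\I}(\vec u)
}
by contraction in the closed ball $\{\vec u\in S(T,\I):\|\vec u\|_{S(T,\I)}\le 2A\}$. The bounds
\EQ{
 \|B(\vec u_1,\vec u_2)\|_S\les 2^{-\te\be}\|\vec u_1\|_S\|\vec u_2\|_S, \pq \|Q(\vec u_1,\vec u_2)\|_S\les C(\be)\|\vec u_1\|_{\widetilde{S}}\|\vec u_2\|_{\widetilde{S}}
}
(with the analogous cubic bound for $T$) from Lemma \ref{B-Q-T}, combined with $Z\supset S$ and the above choices, make the right-hand side a contraction on that ball. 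The fixed point is then automatically a solution of the Zakharov system on $(T,\I)$.

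For the convergence $\vec u_n\to\vec u$ in $S(T,\I)$, I would apply Lemma \ref{Stability} with $\vec u$ playing the role of the approximate solution. The driving error has two pieces: (i) the boundary term $U(t+t_n)B(U(-t_n)\vec f)$, whose $H^1\times L^2$-norm vanishes as $-t_n\to+\I$ by the regularity gain of $B$ combined with the dispersive decay of $U(-t_n)\vec f$ in $L^p$ ($p>2$); and (ii) the Duhamel tails $(Q_{-t_n}-Q_{\I})(\vec u_n)$ and $(T_{-t_n}-T_{\I})(\vec u_n)$, whose $S(T,\I)$-norms tend to $0$ by Lemma \ref{B-Q-T} together with the $\widetilde{S}$-smallness on the tail region $(-\I,-t_n)$ of $U(\cdot)\vec f$ (which approximates $\vec u_n$ there). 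Uniqueness of $\vec u$ in $S(T,\I)$ follows from the contraction property. For the final statement, if $\vec u_n$ is uniformly bounded in $L^\I(\R;H^1\times L^2)$ by $M$, the $H^1\times L^2$-local theory bounds $\vec u_n$ uniformly in $S(J)$ on each compact interval $J$; then Lemma \ref{Stability} applied on $[T',T]$ for arbitrary $T'<T$, with the already-proven smallness of $\|\vec u_n(T)-\vec u(T)\|_{H^1\times L^2}$ as input, extends $\vec u$ to all of $\R$ and gives $\vec u_n\to\vec u$ in $S(T',\I)$ for any $T'$.

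The main obstacle is the bare bilinear term $B(\vec u)$: unlike the Duhamel terms it has no time integration, so there is no natural smallness to exploit by shrinking the interval. The remedy, exactly as in the proof of Lemma \ref{Stability}, is to pick $\be$ large depending on $A$ so that the factor $2^{-\te\be}$ in Lemma \ref{B-Q-T} absorbs the $O(A)$-bound of $\|\vec u\|_S$ on the contraction ball.
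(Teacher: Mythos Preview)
Your overall strategy is sound and close to the paper's, but there is a genuine gap in the contraction step. You run the fixed point on the ball $\{\|\vec u\|_{S(T,\I)}\le 2A\}$ and then invoke the bound $\|Q(\vec u_1,\vec u_2)\|_S\lec C(\be)\|\vec u_1\|_{\widetilde S}\|\vec u_2\|_{\widetilde S}$. But on your ball the only information about $\|\vec u\|_{\widetilde S}$ is $\|\vec u\|_{\widetilde S}\lec\|\vec u\|_S\le 2A$, so the resulting Lipschitz constant for $Q$ is of order $C(\be)A$, which is \emph{large} precisely because you chose $\be$ large to kill $B$. The smallness $\|U(t)\vec f\|_{\widetilde S(T,\I)}\ll 1/C(\be)$ that you arranged concerns only the free part, not the full iterate, and is never propagated to the ball. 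As written the map is not a contraction.

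The paper avoids this tension by \emph{fixing} $\be$ (e.g.\ $\be=10$) and instead building the smallness into the iteration space: it works in $X=\{\|\vec u\|_{S(T,\I)}\lec 1,\ \|\vec u\|_{Z(T,\I)}\le\eta\}$ with $\eta\sim\|U(t)\vec f\|_{Z(T,\I)}\to 0$ as $T\to\I$ (using $\|U(t)\vec f\|_{Z}\lec\|U(t)\vec f\|_{L^\I_{t>T}(L^6_x\times\dot H^{-1}_6)}$). Then the interpolation form of \eqref{NL est} supplies the small factor $\|\vec u\|_Z^\te$ uniformly for \emph{all three} of $B,Q,T$, so there is no competition between $2^{-\te\be}$ and $C(\be)$. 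Your argument can be repaired by adding a constraint $\|\vec u\|_{\widetilde S(T,\I)}\le 2\de$ (or equivalently a $Z$-constraint) to the ball and checking it is preserved; but note that preserving it for $B$ forces $2^{-\te\be}A^2\lec\de$, while preserving it for $Q$ forces $\de\lec 1/C(\be)$, so you must verify that these are compatible. The paper's route with fixed $\be$ sidesteps this bookkeeping entirely.

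The remaining parts of your outline---the vanishing of the boundary term $U(t+t_n)B(U(-t_n)\vec f)$ via dispersive decay in $L^6\times\dot H^{-1}_6$, the convergence $\vec u_n\to\vec u$ (the paper estimates the difference directly via \eqref{NL est} rather than invoking Lemma~\ref{Stability}, but either works), and the global extension under the uniform $H^1\times L^2$ bound by local well-posedness---match the paper and are fine.
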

\begin{proof}
First, we can solve \eqref{wave op} on $(T,\I)$ for $T\gg 1$, by the iteration argument similar to \cite{GN} in the space 
\EQ{
 X:=\{\vec u\in C([T,\I);H^1\times L^2)\ |\ \|\vec u\|_{S(T,\I)} \lec 1, \|\vec u\|_{Z(T,\I)} \leq \eta\},}
with $\y:=2\|U(t)\vec f\|_{Z(T,\I)}\ll 1$, using the estimates similar to \eqref{NL est} as well as 
\EQ{
\|U(t)\vec f\|_{Z(T,\I)}\lesssim \|U(t)\vec f\|_{L^\I_{t>T}(L^6_x \times \dot{H}^{-1}_6)}\to 0 \text{ as } T\to\I.}
Similar estimates imply that $\vec u_n$ are scattering as $t\to\I$ for large $n$. Also similarly to \eqref{NL est}, we have for some $\te>0$
\EQ{
 \|U(t+t_n)B(U(-t_n)\vec f)\|_{S} \pt\lec \|B(U(-t_n)\vec f)\|_{H^1\times L^2} 
 \pr\lec \|U(-t_n)\vec f\|_{H^1\times L^2}^{1-\te}\|U(-t_n)\vec f\|_{L^6\times \dot H^{-1}_6}^\te \to 0.}
Then by applying \eqref{NL est} to the difference equation, we obtain the convergence $\vec u_n\to \vec u$ in $S(T,\I)$. Since $\vec u_n$ solves the Zakharov system, so does the limit $\vec u$. If the former is uniformly bounded in $H^1\times L^2$, so is the latter, and the convergence is also extended to arbitrary $(T,\I)$ by the local wellposedness. 
\end{proof}

\subsection*{Acknowledgment}
Z. Guo is supported in part by NNSF of China (No. 11001003) and RFDP
of China (No. 20110001120111). 
S. Wang is supported by China Scholarship Council.

\end{document}